\newtheorem{theo}{Theorem}[section]
\newtheorem{lemma}[theo]{Lemma}
\newtheorem{prop}[theo]{Proposition}
\newtheorem{coro}[theo]{Corollary}
\newtheorem{rema}[theo]{Remark}
\def\eps{\varepsilon}
\def\w{\omega}
\def\N{\mathbb{N}}
\def\Z{\mathbb{Z}}
\def\R{\mathbb{R}}
\def\C{\mathbb{C}}
\def\cH{\mathcal{H}}
\def\supp{\text{Supp}}
\def\namedlabel#1#2{\begingroup
	#2
	\def\@currentlabel{#2}
	\phantomsection\label{#1}\endgroup}
\begin{document}

\title{Random periodic sampling patterns for shift-invariant spaces}

\author{Jorge Antezana, Diana Carbajal and Jos\'e Luis Romero
	\thanks{© 2023 IEEE.  Personal use of this material is permitted.  Permission
		from IEEE must be obtained for all other uses, in any current or
		future media, including reprinting/republishing this material for
		advertising or promotional purposes, creating new collective works,
		for resale or redistribution to servers or lists, or reuse of any
		copyrighted component of this work in other works. DOI: 10.1109/TIT.2023.3320289.}}

\maketitle

	\begin{abstract}
		We consider multi-variate signals spanned by the integer shifts of a set of generating functions with distinct frequency profiles and the problem of reconstructing them from samples taken on a random periodic set. We show that such a sampling strategy succeeds with high probability provided that the density of the sampling pattern exceeds the number of frequency profiles by a logarithmic factor.
		The signal model includes bandlimited functions with multi-band spectra. While in this well-studied setting delicate constructions provide sampling strategies that meet the information theoretic benchmark of Shannon and Landau, the sampling pattern that we consider provides, at the price of a logarithmic oversampling factor, a simple alternative that is accompanied by favorable a priori stability margins (snug frames). More generally, we also treat bandlimited functions with arbitrary compact spectra, and different measures of their complexity and approximation rates by integer tiles. 
		At the technical level, we elaborate on recent work on relevant sampling, with the key difference that the reconstruction guarantees that we provide hold uniformly for all signals, rather than for a subset of well-concentrated ones. This is achieved by methods of concentration of measure formulated on the Zak domain.
	\end{abstract}

\begin{IEEEkeywords}
multi-tile spectra, shift-invariant spaces, Zak transform, random sampling.
\end{IEEEkeywords}

	\section{Introduction}
\IEEEPARstart{T}{he} classical Nyquist–Shannon sampling theorem provides a link between analog signals and their values at discrete spatial locations. The result concerns \emph{bandlimited signals}, modeled by the \emph{Paley-Wiener space}
\begin{align*}
	PW_{[-1/2,1/2]^d} = \{f\in L^2(\R^d)\,:\, \supp(\hat{f})\subseteq [-1/2,1/2]^d\},
\end{align*}
where $\hat{f}(\omega)= \int_{\R^d} f(x) e^{-2\pi i \langle x, \omega\rangle} dx$ denotes the Fourier transform.
Heuristically, bandlimited functions have one degree of freedom per unit of space, because of the restriction on their oscillation imposed by their Fourier support. The Nyquist–Shannon theorem formalizes this intuition, showing that the energy of a bandlimited function is exactly retained by its samples on the integer grid:
\begin{align}\label{eq_i1}
	\|f\|_2^2 = \sum_{k \in \Z^d} |f(k)|^2, \qquad f \in PW_{[-1/2,1/2]^d}.
\end{align}
While more sophisticated sampling theorems also cover possibly non-uniform sampling patterns, the Nyquist–Shannon excels by its simplicity and is a corner stone of signal processing \cite{843002}.

Much research has been motivated by the quest for comparable results for general Paley-Wiener spaces
\begin{align*}
	PW_\Omega = \{f\in L^2(\R^d)\,:\, \supp(\hat{f})\subseteq\Omega\},
\end{align*}
defined with respect to a compact set $\Omega \subset \R^d$ \cite{OU16}. For example, sampling problems related to \emph{multi-tile spectra} of the form
\begin{align}\label{eq_i2}
	\Omega=\bigcup_{i=1}^k [-1/2,1/2]^d+\ell_i, 
\end{align}
with $\ell_1,\dots,\ell_k \in \Z^d$ distinct, are relevant in wireless communications as models for physical or commercial restrictions on certain frequency intervals. Denoting the \emph{spectral diameter} by $N:=2\max\{|\ell_1|_\infty, \ldots, |\ell_k|_\infty\}$, a rescaling of \eqref{eq_i1} shows that the grid $\tfrac{1}{N+1} \Z^d$ captures the energy of each function in $PW_\Omega$. However, in many situations of interest, the spectrum \eqref{eq_i2} contains a relatively small number $k$ of active frequency bands, and one could expect $k$ rather than $N$ to govern the required sampling rate. As we discuss below, this intuition is indeed correct, even though the sampling grids need to be replaced by slightly more complex patterns.

Whenever a set $\Lambda \subset \R^d$ provides the \emph{sampling estimates}
\begin{align}\label{eq_i4}
	A \|f\|_2^2 \leq \sum_{\lambda \in \Lambda} |f(\lambda)|^2 \leq B \|f\|_2^2, \qquad f \in PW_\Omega,
\end{align}
for some constants $A,B>0$, Landau's celebrated density theorem \cite{landau1967sampling,La67}
provides the following comparison between the Lebesgue measure of the spectrum $\Omega$ and the so-called \emph{lower Beurling density} of the set $\Lambda$:
\begin{equation}\label{eq_i3}
	D^{-}(\Lambda) := \liminf_{R \to \infty} \inf_{x \in \R^d} \frac{\# \Lambda \cap ([-R/2,R/2]^d + x)}{R^d} \geq |\Omega|.
\end{equation}
The density of a sampling lattice is $D^{-}(\tfrac{1}{L} \Z^d) = L^d$ whereas the measure of the multi-tile spectrum in \eqref{eq_i2} is $|\Omega|=k$.

While in general it is difficult to achieve the benchmark on the right-hand side of \eqref{eq_i3} with uniform sampling patterns, certain slightly more complex patterns accomplish the task. Consider the (possibly) \emph{non-uniform periodic set}
\begin{align}\label{eq_i5}
	\Lambda = \{x_1, \ldots, x_m\} + \Z^d,
\end{align}
and a multi-tile spectrum $\Omega$ as in \eqref{eq_i2}. As shown in \cite{LS97,Far90,Mar06, Ko15}, for a generic choice of the set $\{x_1, \ldots, x_m\} \subset [-1/2,1/2]^d$ with $m=k$, $\Lambda$ satisfies the sampling estimates \eqref{eq_i4}. As ${D^{-}(\Lambda)=m=k}$, such sets saturate the density benchmark \eqref{eq_i3}. Other sampling patterns, such as quasi-crystals, provide similar or comparable results for arbitrary spectra \cite{MM08, OU08, GL14, KN15, DL22}.

Though remarkable, the mentioned results on generic periodic sampling sets \eqref{eq_i5} have certain disadvantages that limit their use in practice. First, the practitioner would need to carefully choose $\{x_1, \ldots, x_m\}$ so as to avoid a certain algebraic variety of exceptional sets. Second, each such choice leads to potentially different sampling constants in \eqref{eq_i4}, and no simple a priori bounds are provided. The purpose of this note is to provide a variant of the mentioned results on non-uniform periodic sampling for multi-tile spectra, where the set $\{x_1, \ldots, x_m\}$ is chosen uniformly at random and $m \approx k \log(k)$. 
With respect to deterministic results, our work represents the following trade-off. On the one hand, the required sampling density, $k \log(k)$, exceeds the benchmark value $k$, albeit by a logarithmic factor. On the other hand, the sampling strategy is simple and is accompanied with
sampling estimates \eqref{eq_i4} with explicit and possibly very favorable stability margins. Second, we formulate our result more generally in the setting of \emph{shift-invariant spaces} \cite{MR1882684,843002}. These spaces are spanned by the integer translates of certain generating functions, which are assumed to have distinct but possibly infinitely supported frequency profiles.

\section{Results and context}
\subsection{Results}
Our main result uses the \emph{Zak transform} of a function ${f \in L^2(\R^d)}$,
\begin{equation}
	Z_{f}(x,\w) = \sum_{\ell\in\Z^d} f(x+\ell) e^{-2\pi i \langle \ell,\w\rangle},
	\qquad 	(x,\w)\in\R^{2d},
\end{equation}
and reads as follows.

\begin{theo}\label{thm:main-thm}
	Let  $\Phi=\{\phi_1,\dots,\phi_k\} \subset L^2(\R^d)$ be a set of functions whose integer translates ${E(\Phi)= \left\{\phi_i(\cdot- \ell)\,:\,i=1,\dots,k,\,\ell\in\Z^d \right\}}$ are orthonormal, and consider the \emph{shift-invariant space}
	\begin{equation}\label{eq:SIS}
		V=S(\Phi):=\overline{\text{span}}\left\{\phi_i(\cdot- \ell)\,:\, i=1,\dots,k,\,\ell\in\Z^d \right\}.
	\end{equation} 
	Assume the following:
	\begin{enumerate}
		\item\label{item:cond-0}({Distinct frequency profiles}).
		There are functions $\{\psi_1, \ldots, \psi_k\} \subset L^2(\R^d)$
		and base frequencies $\ell_1,\dots,\ell_k\in\Z^d$ such that
		\begin{equation}\label{eq:phi_i}
			\phi_i = e^{2\pi i \langle \ell_i, \cdot\rangle } \psi_i,\quad i=1,\dots,k.
		\end{equation}
		\item\label{item:cond-1}({Bounded quadratic periodization in time})
		$$\sup_{i=1,\ldots,k} \sup_{x\in[-1/2,1/2]^d} \sum_{\ell\in\Z^d} |\psi_i(x+\ell)|^2 < \infty.$$
		\item\label{item:cond-2}({Bounded periodization in frequency}) 
		$$C:=\sup_{i=1,\ldots,k}\underset{\w\in [-1/2,1/2]^d}{\text{\rm ess sup }}\sum_{\ell\in\Z^d} |\hat{\psi}_i(\w+\ell)| < \infty.$$
		\item\label{item:cond-3}(Smooth Zak transform) 
		\begin{align}
		&K:=\underset{\substack{\w,\w'\in [-1/2,1/2]^d\\ \w\neq\w'}}{\text{\rm ess sup }} \sup_{\substack{x\in[-1/2,1/2]^d\\i=1,\ldots,k}}\\
		&\qquad\qquad\qquad  \frac{|Z_{\hat\psi_i}(\w,x)-Z_{\hat\psi_i}(\w',x)|}{\|\w-\w'\|_{\infty}} < \infty.
		\end{align}
	\end{enumerate}		
	Then $V$ is a reproducing kernel Hilbert space. Moreover, let $X=\{x_1,\dots,x_m\}$ be a set of independent random variables that are uniformly distributed in  $[-1/2,1/2]^d$, let $\varepsilon>0$ and $0<\alpha<1$.  If
	\begin{equation}
		m\geq10 \cdot \frac{C^2}{\alpha^2}\cdot k\cdot  \log\left( \frac{2k}{\varepsilon} \left( \frac{2k C K}{\alpha}  +1\right)^d\right),
	\end{equation}
	then the sampling inequalities
	\begin{equation}\label{eq:sampling-ineq-m}
		m(1-\alpha) \|f\|^2 \leq  \sum_{r=1}^m  \sum_{\ell\in\Z^d} |f(x_r+\ell)|^2 \leq m(1+\alpha) \|f\|^2,
	\end{equation}
	hold for every $f\in V$ with probability at least $1-\varepsilon$.
\end{theo}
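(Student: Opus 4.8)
The plan is to pass to the Zak domain, where the problem decouples over frequency fibers into a family of finite-dimensional random matrix problems, and then to make the estimate uniform in the fiber variable by a net argument powered by the smoothness hypothesis. Throughout, conditions (ii) and (iv) guarantee that the relevant Zak transforms are genuine continuous functions, which justifies the pointwise manipulations below.

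First I would record the two structural facts that drive everything. Writing $f=\sum_{i}\sum_{\ell}c_{i,\ell}\,\phi_i(\cdot-\ell)\in V$ and using that $Z$ is unitary from $L^2(\R^d)$ onto $L^2([-1/2,1/2]^{2d})$ together with the quasi-periodicity $Z_{\phi_i(\cdot-\ell)}(x,\omega)=e^{-2\pi i\langle\ell,\omega\rangle}Z_{\phi_i}(x,\omega)$, one obtains the fiber representation $Z_f(x,\omega)=\sum_{i=1}^k Z_{\phi_i}(x,\omega)\,m_i(\omega)$ with $m_i(\omega)=\sum_\ell c_{i,\ell}e^{-2\pi i\langle\ell,\omega\rangle}$. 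Orthonormality of $E(\Phi)$ is equivalent to the fiberwise orthonormality $\int_{[-1/2,1/2]^d}Z_{\phi_i}(x,\omega)\overline{Z_{\phi_j}(x,\omega)}\,dx=\delta_{ij}$ for a.e.\ $\omega$, so that $\|f\|^2=\int_{[-1/2,1/2]^d}\|m(\omega)\|^2\,d\omega$. Combining the Zak duality identity $Z_{\psi_i}(x,\omega)=e^{2\pi i\langle x,\omega\rangle}Z_{\hat\psi_i}(\omega,-x)$ with $\phi_i=e^{2\pi i\langle\ell_i,\cdot\rangle}\psi_i$ gives the pointwise bound $|Z_{\phi_i}(x,\omega)|=|Z_{\hat\psi_i}(\omega,-x)|\le C$ from the bounded-periodization-in-frequency hypothesis. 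Since $\sum_\ell|f(x+\ell)|^2=\int_{[-1/2,1/2]^d}|Z_f(x,\omega)|^2\,d\omega\le kC^2\|f\|^2$ by Cauchy--Schwarz in the fiber, point evaluations are bounded and $V$ is an RKHS.

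Next I would reduce the sampling inequalities to a uniform matrix event. Using Parseval in $\omega$, one has $\sum_\ell|f(x_r+\ell)|^2=\int_{[-1/2,1/2]^d}|Z_f(x_r,\omega)|^2\,d\omega$, so the middle term in \eqref{eq:sampling-ineq-m} equals $\int_{[-1/2,1/2]^d}m(\omega)^*S(\omega)m(\omega)\,d\omega$, where $S(\omega)=\sum_{r=1}^mM_r(\omega)$ and $M_r(\omega)$ is the rank-one positive semidefinite matrix with entries $\overline{Z_{\phi_i}(x_r,\omega)}\,Z_{\phi_j}(x_r,\omega)$. By fiberwise orthonormality $\E M_r(\omega)=I_k$, and since $\|f\|^2=\int\|m(\omega)\|^2\,d\omega$, the event $\{\,\|S(\omega)-mI_k\|\le m\alpha\text{ for a.e.\ }\omega\,\}$ implies \eqref{eq:sampling-ineq-m} for every $f\in V$ simultaneously. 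This is the crucial point: passing to fibers turns the uniform-over-$f$ requirement into an operator-norm bound on a fixed $k\times k$ matrix at each $\omega$.

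I would then prove the matrix event in two moves. At a fixed $\omega$ the $M_r(\omega)$ are i.i.d.\ positive semidefinite with $\E M_r(\omega)=I_k$ and $\|M_r(\omega)\|=\sum_i|Z_{\phi_i}(x_r,\omega)|^2\le kC^2$, so matrix Chernoff gives $\P(\|S(\omega)-mI_k\|>m\alpha)\le 2k\exp(-c\,\alpha^2 m/(kC^2))$. To upgrade this to all $\omega$ I would discretize: deterministically, $\omega\mapsto S(\omega)$ is Lipschitz in operator norm with constant $\lesssim m\,kCK$, obtained by expanding the products $\overline{Z_{\phi_i}}Z_{\phi_j}$ and bounding each factor by $C$ and its modulus of continuity by $K$ (the modulation $e^{2\pi i\langle x,\omega\rangle}$ contributing only a benign $C$-term). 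Choosing a $\delta$-net $\mathcal N$ of $[-1/2,1/2]^d$ in $\|\cdot\|_\infty$ with $\delta\sim\alpha/(kCK)$, hence $|\mathcal N|\lesssim(2kCK/\alpha+1)^d$, I would control $\|S(\omega_0)-mI_k\|\le m\alpha/2$ on $\mathcal N$ by a union bound and bridge the remaining $m\alpha/2$ by the Lipschitz estimate. Setting the failure probability $|\mathcal N|\cdot 2k\exp(-c\alpha^2m/(kC^2))$ below $\varepsilon$, taking logarithms, and matching constants yields exactly the stated threshold on $m$. The main obstacle is precisely this double uniformity, over all $f\in V$ and over the continuum of fibers $\omega$: the fiber reduction dispatches the first, while the second forces the net argument and is exactly why the smoothness hypothesis with constant $K$ is needed; the delicate part is tracking the Lipschitz constant so that the net cardinality reproduces the factor $(2kCK/\alpha+1)^d$ in the logarithm.
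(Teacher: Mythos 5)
Your proposal is correct and follows the same overall architecture as the paper's proof: fiberize over the frequency variable $\omega$, apply matrix concentration at each fixed $\omega$ to the sum of the rank-one matrices with entries $\overline{Z_{\phi_i}(x_r,\omega)}Z_{\phi_j}(x_r,\omega)$ (using $\E T_r(\omega)=I_k$ from orthonormality and $\|T_r(\omega)\|\leq kC^2$ from Condition (iii)), and then make the bound uniform in $\omega$ via a net of cardinality of order $(2kCK/\alpha+1)^d$ together with an operator-norm Lipschitz estimate of order $mkCK$. Where you genuinely differ is the reduction step: the paper's Proposition \ref{propo:sampling-condition} proves the \emph{equivalence} between the sampling inequalities and the fiberwise matrix bounds \eqref{eq:sampling-ineq-Zak} by expanding the reproducing kernels in $E(\Phi)$ and invoking the dual-Gramian characterization of frames of shift-invariant spaces, whereas you derive the (sufficient) implication directly from the representation $Z_f(x,\omega)=\sum_i Z_{\phi_i}(x,\omega)m_i(\omega)$ and Parseval in $\omega$. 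Your route is more elementary and self-contained, and proves only the direction actually needed; the paper's route yields the full equivalence and, by working with frames of reproducing kernels, dispatches the convergence issues (why $\{f(x_r+\ell)\}_\ell\in\ell^2$ for every $f\in V$, and why the fiber identity may be evaluated at the specific points $x_r$) that your sketch leaves implicit and which would require a density-plus-RKHS limiting argument to be spelled out.

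Two points require care for the theorem to come out exactly as stated. First, your parenthetical that the modulation $e^{2\pi i\langle x,\omega\rangle}$ contributes ``only a benign $C$-term'' to the Lipschitz constant would in fact degrade the result: it would replace $K$ by something like $K+\pi d C$ inside the net cardinality, hence inside the logarithm, so the threshold on $m$ would not be the stated one. What saves the argument is that in the products $\overline{Z_{\phi_i}(x_r,\omega)}Z_{\phi_j}(x_r,\omega)$ all $\omega$-dependent unimodular factors cancel \emph{exactly} (each factor carries the same modulation, once conjugated and once not), so each entry is Lipschitz in $\omega$ with constant exactly $2CK$; this cancellation is precisely what the paper isolates in \eqref{eq:phi-to-psi} and flags as crucial. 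Second, Conditions (iii) and (iv) are essential suprema, so the bounds and the Lipschitz property hold only off a null set of $\omega$; you must either pass to continuous (Lipschitz) versions of $Z_{\hat\psi_i}(\cdot,x)$ — which your continuity remark gestures at but should state explicitly — or, as the paper does, construct the net $\mathcal N_{\delta,\eta}$ so that it avoids the exceptional null set and then let the perturbation parameter $\eta\to 0^+$. A final minor point: the generic matrix Chernoff tail for PSD summands gives a constant slightly worse than $10$ (e.g.\ $12$ in the upper tail with deviation $\alpha/2$); to land on the stated constant one should use the matrix Bernstein inequality with the variance bound $\sigma^2\leq m(kC^2-1)$, as the paper does. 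None of these breaks your proof, but all three must be addressed for the quantifier ``a.e.\ $\omega$'' and the explicit constants to match the statement.
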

Some remarks are in order. First, under the assumptions of Theorem \ref{thm:main-thm}, $Z_{\hat\psi_i}(\w,x)$ is well-defined for every $x$ and almost every $\w$, see Remark \ref{rem:Zak}. The claim that $V$ is a reproducing kernel Hilbert space means that each function in $V$ is continuous, and the evaluation function $V \ni f \mapsto f(x)$ is bounded for each $x \in \R^d$.

Second, under a moderate amount of \emph{oversampling}, the ratio between the upper and lower stability bounds in \eqref{eq:sampling-ineq-m} is close to $1$, much in analogy to the classical Shannon-Nyquist theorem. This property, called \emph{snugness} in the engineering literature, means that the recovery of a function from its samples is numerically efficient. The merit of Theorem \ref{thm:main-thm} is in achieving this while keeping the sampling density close to the benchmark value given by Landau's theorem \cite{landau1967sampling,La67}. Note that, although a simple  \emph{oscillation estimate} provides a snug sampling set, it would need samples at the potentially much larger rate dictated by the maximal oscillation	$\max_j |\ell_j|_\infty$.

We note that in Theorem \ref{thm:main-thm} the class $\{\psi_1,\dots,\psi_k\}$ admits repetition. In fact, to cover the model case of Paley-Wiener spaces with multi-tile spectra, we can let $\hat{\psi}_i$ be the indicator function $\hat{\psi}_i=\chi_{[-1/2,1/2]^d}$. Then $Z_{\hat\psi_i}(\w,x)=1$, $K=0$ and $C=1$, and we have the following corollary of Theorem \ref{thm:main-thm}.

\begin{coro}\label{coro:union-cubes}
	Let $\Omega=\bigcup_{i=1}^k [-1/2,1/2]^d+\ell_i$, with $\ell_1,\dots,\ell_k$ distinct points in $\Z^d$. Let $X=\{x_1,\dots,x_m\}$ be independent random variables uniformly distributed in  $[-1/2,1/2]^d$, $\varepsilon>0$ and $0<\alpha<1$. If
	\begin{equation}
		m\geq \frac{10}{\alpha^2}\cdot k\cdot \log\left( \frac{2k}{\varepsilon}\right),
	\end{equation}
	then the sampling inequalities
	\begin{equation}\label{eq:sampling-ineq-union-cubes}
		m(1-\alpha) \|f\|^2 \leq  \sum_{r=1}^m  \sum_{\ell\in\Z^d} |f(x_r+\ell)|^2 \leq m(1+\alpha) \|f\|^2,
	\end{equation}
	 hold for every $f\in PW_\Omega$ with probability at least $1-\varepsilon$.
\end{coro}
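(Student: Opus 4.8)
The plan is to recognize $PW_\Omega$ as an instance of the shift-invariant space $S(\Phi)$ of Theorem \ref{thm:main-thm} and to read off the conclusion after computing the relevant constants. Concretely, I would take generators $\phi_1,\dots,\phi_k$ defined on the Fourier side by $\hat\phi_i=\chi_{[-1/2,1/2]^d+\ell_i}$, that is, $\phi_i$ bandlimited to the $i$-th tile of $\Omega$. With the modulation convention $\phi_i=e^{2\pi i\langle\ell_i,\cdot\rangle}\psi_i$ of hypothesis \ref{item:cond-0}, this forces $\hat\psi_i(\omega)=\hat\phi_i(\omega+\ell_i)=\chi_{[-1/2,1/2]^d}(\omega)$, so that $\psi_i=\psi$ is, for every $i$, the single tensorized cardinal sine kernel with $\hat\psi=\chi_{[-1/2,1/2]^d}$.

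First I would check that $\Phi=\{\phi_1,\dots,\phi_k\}$ produces $PW_\Omega$ with an orthonormal generating system, as the theorem demands. Because $\ell_1,\dots,\ell_k\in\Z^d$ are distinct, the cubes $[-1/2,1/2]^d+\ell_i$ have pairwise disjoint interiors and each is an integer translate of a fundamental domain of $\Z^d$. Hence on the Fourier side $\langle\phi_i(\cdot-\ell),\phi_j(\cdot-\ell')\rangle=\int_{\Omega}\hat\phi_i\,\overline{\hat\phi_j}\,e^{-2\pi i\langle\ell-\ell',\omega\rangle}\,d\omega$ vanishes unless $i=j$ and $\ell=\ell'$, giving orthonormality of $E(\Phi)$; and the closed span of $\{e^{-2\pi i\langle\ell,\cdot\rangle}\hat\phi_i\}_{i,\ell}$ is exactly $L^2(\Omega)$ by Fourier series on each tile, so $S(\Phi)=PW_\Omega$.

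Next I would verify conditions \ref{item:cond-1}--\ref{item:cond-3} and evaluate their constants for $\hat\psi=\chi_{[-1/2,1/2]^d}$. The time-side quadratic periodization $\sum_\ell|\psi(x+\ell)|^2$ is bounded uniformly in $x$ because $\psi$ is a tensor product of one-dimensional sine kernels, whose square modulus is summable over $\Z^d$; this gives \ref{item:cond-1}. For \ref{item:cond-2}, for a.e.\ $\omega\in[-1/2,1/2]^d$ only the term $\ell=0$ contributes to $\sum_\ell|\hat\psi(\omega+\ell)|$, so this sum equals $1$ and $C=1$. For \ref{item:cond-3}, the same observation gives $Z_{\hat\psi}(\omega,x)=\sum_\ell\hat\psi(\omega+\ell)e^{-2\pi i\langle\ell,x\rangle}=1$ for a.e.\ $\omega\in[-1/2,1/2]^d$ and every $x$; being constant in $\omega$, the difference quotient defining $K$ is $0$, whence $K=0$.

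Finally I would substitute $C=1$ and $K=0$ into the sample-size threshold of Theorem \ref{thm:main-thm}: the factor $\left(\tfrac{2kCK}{\alpha}+1\right)^d$ collapses to $1$, and the requirement becomes $m\geq\tfrac{10}{\alpha^2}\,k\log\!\left(\tfrac{2k}{\varepsilon}\right)$, exactly the hypothesis of the corollary. The sampling inequalities \eqref{eq:sampling-ineq-union-cubes} then follow at once from \eqref{eq:sampling-ineq-m}. I do not expect a genuine obstacle here, since the argument is merely a verification that the model case fits the general theorem; the only point demanding a little care is the Fourier-domain bookkeeping identifying which integer translate lands in the fundamental domain, and it is precisely this that forces both $C$ and $K$ to their extremal values and collapses the logarithmic factor.
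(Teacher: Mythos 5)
Your proposal is correct and follows essentially the same route as the paper's own proof: choosing $\hat\psi_i=\chi_{[-1/2,1/2]^d}$ (equivalently $\hat\phi_i=\chi_{[-1/2,1/2]^d+\ell_i}$), verifying orthonormality of $E(\Phi)$ and $S(\Phi)=PW_\Omega$ via the disjoint Fourier supports, computing $C=1$ and $K=0$, and substituting into the sample-size threshold of Theorem \ref{thm:main-thm} so that the factor $\left(\tfrac{2kCK}{\alpha}+1\right)^d$ collapses to $1$. The extra Fourier-side bookkeeping you include (the inner-product computation and the Fourier-series argument on each tile) is a slightly more detailed version of what the paper states briefly, but the argument is the same.
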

We remark that the bounds in Corollary \ref{coro:union-cubes} are independent of the ambient dimension $d$. We prove Corollary \ref{coro:union-cubes} in Section \ref{sec:thm3}.

Finally, we discuss Paley-Wiener spaces with general spectra. We define the \emph{tiling complexity index} of a compact set ${\Omega \subset \R^d}$ as the minimum number $N=N_\Omega \in \N$ for which there exist disjoint measurable sets ${Q_1, \ldots, Q_N\subseteq [-1/2,1/2]^d}$ and $B_1, \ldots, B_N \subseteq \Z^d$ such that
\begin{align}\label{eq_compi}
	\Omega = \bigcup_{n=1}^N Q_n + B_n,
\end{align}
up to null measure sets. If $\Omega$ is as in Corollary \ref{coro:union-cubes} then ${N_\Omega=1}$. On the other hand, for
more complex sets, $N_\Omega$ may be large; a crude upper bound is $N_\Omega \leq \binom{\# L_\Omega}{k_\Omega}$, where 
\begin{align}\label{eq:k-level}
	&k_\Omega:= \underset{\w\in [-1/2,1/2]^d}{\text{\rm ess sup }} \sum_{\ell \in \Z^d}\chi_{\Omega}(\w+\ell),
	\\
	\label{eq:L_Omega}
	&L_\Omega:= \left\{\ell \in \Z^d\,:\,\left|\Omega\cap \left([-1/2,1/2)^d+\ell\right)\right| >0\right\}.
\end{align}

The following result generalizes Corollary \ref{coro:union-cubes} to general compact spectra.

\begin{theo}\label{coro:k-tiles}
	Let $\Omega\subset \R^d$ be a compact set with tiling complexity index $N_\Omega$ and let $k=k_\Omega$ be given by \eqref{eq:k-level}. Let $X=\{x_1,\dots,x_m\}$ be independent random variables uniformly distributed in  $[-1/2,1/2]^d$, $\varepsilon>0$, $0<\alpha<1$, and
	\begin{equation}
		m\geq \frac{10}{\alpha^2}\cdot k\cdot \log\left( \frac{2N_\Omega k}{\varepsilon}\right).
	\end{equation}
	Then the sampling inequalities in \eqref{eq:sampling-ineq-union-cubes} hold with probability at least $1-\varepsilon$.
\end{theo}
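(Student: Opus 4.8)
The plan is to reduce the statement to the already-established Corollary \ref{coro:union-cubes} by exploiting the tiling decomposition together with a fiberwise (Zak-domain) reformulation of the sampling sum. First I would fix a decomposition $\Omega=\bigcup_{n=1}^{N_\Omega}Q_n+B_n$ realizing the tiling complexity index, with the $Q_n\subseteq[-1/2,1/2]^d$ pairwise disjoint and $B_n\subseteq\Z^d$ of cardinality $k_n:=\#B_n$. Since the $Q_n$ are disjoint and contained in the fundamental cube, the union is disjoint up to null sets, and for $\omega\in Q_n$ the only integers $j$ with $\omega+j\in\Omega$ are precisely those in $B_n$; in particular $k_n\le k_\Omega=k$ for every $n$.

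The key computation is the identity $\sum_{\ell\in\Z^d}|f(x+\ell)|^2=\int_{[-1/2,1/2]^d}|Z_f(x,\omega)|^2\,d\omega$ combined with the quasi-periodic form of the Zak transform, $Z_f(x,\omega)=e^{2\pi i\langle x,\omega\rangle}\sum_{j\in\Z^d}\hat f(\omega+j)e^{2\pi i\langle x,j\rangle}$. For $\omega\in Q_n$ this collapses to a finite sum over $B_n$, so that, writing $c_\omega:=(\hat f(\omega+b))_{b\in B_n}$ and $u_n(x):=(e^{2\pi i\langle x,b\rangle})_{b\in B_n}$, one obtains $|Z_f(x,\omega)|^2=|\langle c_\omega,\overline{u_n(x)}\rangle|^2=c_\omega^\ast M_n(x)c_\omega$ with the rank-one matrix $M_n(x):=u_n(x)u_n(x)^\ast$. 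Because the entries of $M_n$ are $e^{2\pi i\langle x,b-b'\rangle}$ with $b-b'\in\Z^d$, direct integration gives $\mathbb{E}_x M_n(x)=I_{k_n}$, and crucially $M_n$ does not depend on $\omega$ inside $Q_n$. Averaging over the samples and integrating in $\omega$ then yields $\tfrac1m\sum_r\sum_\ell|f(x_r+\ell)|^2=\int_{[-1/2,1/2]^d}c_\omega^\ast A_{n(\omega)}c_\omega\,d\omega$, where $A_n:=\tfrac1m\sum_r M_n(x_r)$ and $n(\omega)$ is the index of the cell containing $\omega$, together with $\int\|c_\omega\|^2\,d\omega=\|f\|^2$.

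Thus it suffices to control, for each $n$, the event $\mathcal E_n$ that $(1-\alpha)I\preceq A_n\preceq(1+\alpha)I$, since on $\bigcap_n\mathcal E_n$ the integrand above is squeezed between $(1-\alpha)\|c_\omega\|^2$ and $(1+\alpha)\|c_\omega\|^2$ pointwise, and integrating gives \eqref{eq:sampling-ineq-union-cubes}. Now I would observe that $\mathcal E_n$ is exactly the event furnished by Corollary \ref{coro:union-cubes} applied to the auxiliary full-cube multi-tile spectrum $\tilde\Omega_n:=\bigcup_{b\in B_n}([-1/2,1/2]^d+b)$: for that spectrum the Fourier data $c_\omega$ ranges over all of $L^2([-1/2,1/2]^d;\C^{k_n})$, so the sampling inequalities \eqref{eq:sampling-ineq-union-cubes} for every $g\in PW_{\tilde\Omega_n}$ are equivalent to the operator-norm bound $\|A_n-I\|\le\alpha$. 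Applying the corollary with confidence parameter $\varepsilon/N_\Omega$ and band number $k_n\le k$, and using monotonicity to absorb $k_n$ and $k_nN_\Omega$ into $k$ and $kN_\Omega$, the hypothesis $m\ge\frac{10}{\alpha^2}k\log(2N_\Omega k/\varepsilon)$ guarantees $\mathbb{P}(\mathcal E_n)\ge 1-\varepsilon/N_\Omega$. A union bound over the $N_\Omega$ cells then gives $\mathbb{P}(\bigcap_n\mathcal E_n)\ge 1-\varepsilon$, completing the argument.

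The main obstacle, and the step deserving the most care, is the deterministic equivalence in the last paragraph: that the sampling inequalities for the auxiliary space $PW_{\tilde\Omega_n}$ are genuinely equivalent to the pointwise operator bound on $A_n$, and that this same bound simultaneously controls the partial-cube integral over $Q_n$ appearing for the actual space $PW_\Omega$. This hinges on the observation that the sampling matrix $M_n(x)$ is independent of $\omega$ throughout $Q_n$ (so no continuous net over frequency is needed, unlike in Theorem \ref{thm:main-thm}), reducing the whole problem to finitely many fixed matrix-concentration events indexed by the cells of the tiling — which is precisely why the complexity $N_\Omega$ enters only through the logarithmic union-bound factor.
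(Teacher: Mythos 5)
Your proposal is correct, and its probabilistic core coincides with the paper's: both arguments exploit the fact that the relevant fiber matrices are constant in $\omega$ on each tiling cell $Q_n$, so the problem reduces to $N_\Omega$ matrix-concentration events, one per cell, finished by a union bound --- which is exactly why $N_\Omega$ enters only inside the logarithm. The deterministic reduction, however, is genuinely different. The paper first enlarges $\Omega$ to a compact set $\Omega^*\supseteq\Omega$ that multi-tiles $\mathbb{R}^d$ at level exactly $k$ with $N_{\Omega^*}\leq N_\Omega$ (citing \cite[Proposition 2.1]{BCHLMM18}); this forces every $B_n$ to have exactly $k$ elements, so that $PW_{\Omega^*}$ is realized as a shift-invariant space with $k$ orthonormal generators $\hat\phi_i=\chi_{\Omega_i}$, the Zak-matrix criterion of Proposition \ref{propo:sampling-condition} applies, and the Bernstein estimate from the proof of Theorem \ref{thm:main-thm} is then re-run on the $N_\Omega$ matrix configurations. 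You avoid both the enlargement and the shift-invariant-space machinery: your fiberwise Plancherel identity handles cells with unequal cardinalities $k_n\leq k$ directly, and instead of re-running the concentration argument you invoke Corollary \ref{coro:union-cubes} as a black box on the auxiliary cube multi-tiles $\tilde\Omega_n$ with confidence parameter $\varepsilon/N_\Omega$, using constant-fiber test functions to convert its conclusion into the operator bound $\|A_n-I\|\leq\alpha$. What each approach buys: the paper's route stays entirely within its established sampling criterion, so the equivalence between quadratic-form bounds and sampling inequalities comes for free from Proposition \ref{propo:sampling-condition}; your route is more self-contained and elementary, dispenses with the external multi-tiling lemma, and exhibits the theorem as a formal consequence of Corollary \ref{coro:union-cubes} plus a union bound, at the cost of having to justify the equivalence step yourself. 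Two details worth tightening: the identification $\{j\in\mathbb{Z}^d:\omega+j\in\Omega\}=B_n$ for $\omega\in Q_n$ holds only for almost every $\omega$ (null sets come from the decomposition and from overlapping boundaries of cube translates), which suffices because your identity is integrated in $\omega$; and the monotonicity $k_n\log(2k_nN_\Omega/\varepsilon)\leq k\log(2kN_\Omega/\varepsilon)$ should be justified by noting that one may assume $\varepsilon<1$, since otherwise the statement is vacuous.
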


\medskip

Finally we consider a second variant of Corollary \ref{coro:union-cubes} where complexity is measured in terms of boundary regularity. We say that a set $\Omega$ has \textit{maximally Ahlfors regular boundary} if there exists a constant $\kappa_{\partial\Omega}$ such that
\begin{align*}
\cH^{d-1}(\partial\Omega \cap B_r(x))\geq \kappa_{\partial \Omega} r^{d-1},\quad \forall\, 0<r\leq \cH^{d-1}(\partial\Omega)^{\frac{1}{d-1}},\\
\forall x\in\partial\Omega.
\end{align*}
Here $\cH^{d-1}$ denotes the $(d-1)$-Hausdorff measure. Most practically relevant domains enjoy this condition.

\begin{theo}\label{teo general set}
	Let $\Omega\subset \R^d$ be a compact set with maximally Ahlfors
	regular boundary with constant $\kappa_{\partial \Omega}$. Given $\varepsilon>0$ and $0<\alpha<1$, let
	$$
	\rho = \frac{\kappa_{\partial \Omega}}{C_d} \cdot \frac{|\Omega|}{ \cH^{d-1}(\partial \Omega)},
	\quad \mbox{and}\quad 
	m\geq \frac{20}{\alpha^2} \cdot \frac{|\Omega|}{\rho^d} \cdot \log\left(\frac{4}{\varepsilon} \cdot \frac{|\Omega|}{\rho^d}\right),
	$$
	where  $C_d$ denotes an adequate constant that depends only on $d$. If $x_1,\dots,x_m$ are independent random variables uniformly distributed in 
	$\left[-(2\rho)^{-1},(2\rho)^{-1}\right]^d$, 
	then the sampling inequalities
	\begin{equation}\label{eq:sampling-ineq-general}
		m(1-\alpha)\,\rho^d \,\|f\|^2 \leq  \sum_{r=1}^m  \sum_{\ell\in\frac{1}{\rho}\Z^d} |f(x_r+\ell)|^2 \leq m(1+\alpha)\,\rho^d\, \|f\|^2
	\end{equation}
	hold for every $ f\in PW_{\Omega}$ with probability at least $1-\varepsilon$.
\end{theo}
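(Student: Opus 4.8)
The plan is to reduce the statement, via a dilation, to the already-established union-of-cubes case (Corollary~\ref{coro:union-cubes}), and to control the number of cubes needed through the Ahlfors regularity of $\partial\Omega$.

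First I would rescale. Consider the unitary dilation $g(x):=\rho^{-d/2}f(\rho^{-1}x)$, which maps $PW_\Omega$ isometrically onto $PW_{\Omega'}$ with $\Omega':=\rho^{-1}\Omega$, since $\hat g(\xi)=\rho^{d/2}\hat f(\rho\xi)$ is supported in $\rho^{-1}\Omega$. A change of variables gives $\sum_r\sum_{\ell\in\frac1\rho\Z^d}|f(x_r+\ell)|^2=\rho^d\sum_r\sum_{\ell\in\Z^d}|g(y_r+\ell)|^2$ with $y_r:=\rho x_r$, and the $y_r$ are independent and uniform on $[-1/2,1/2]^d$ precisely because the $x_r$ are uniform on $[-(2\rho)^{-1},(2\rho)^{-1}]^d$. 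Hence \eqref{eq:sampling-ineq-general} is equivalent to the integer-lattice inequalities $m(1-\alpha)\|g\|^2\le\sum_r\sum_{\ell\in\Z^d}|g(y_r+\ell)|^2\le m(1+\alpha)\|g\|^2$ for all $g\in PW_{\Omega'}$, holding on the same event. I would also record that Ahlfors regularity is scale invariant: $\Omega'$ again has maximally Ahlfors regular boundary with the same constant $\kappa_{\partial\Omega}$, and $\cH^{d-1}(\partial\Omega')=\rho^{-(d-1)}\cH^{d-1}(\partial\Omega)$, $|\Omega'|=\rho^{-d}|\Omega|$.

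Next I would enlarge $\Omega'$ to a union of integer unit cubes, setting $\hat\Omega':=\bigcup\{\ell+[-1/2,1/2]^d:\ell\in\Z^d,\ (\ell+[-1/2,1/2]^d)\cap\Omega'\neq\emptyset\}$, a union of $k$ distinct integer translates of the unit cube with $\Omega'\subseteq\hat\Omega'$. Corollary~\ref{coro:union-cubes} (whose tiling complexity is $N=1$) applies to $PW_{\hat\Omega'}$, and since $PW_{\Omega'}\subseteq PW_{\hat\Omega'}$, both sampling bounds hold a fortiori for every $g\in PW_{\Omega'}$. It thus suffices to check that the $m$ of the theorem meets the requirement $m\ge\frac{10}{\alpha^2}k\log(\frac{2k}{\varepsilon})$ of Corollary~\ref{coro:union-cubes}; by monotonicity of $x\mapsto x\log(2x/\varepsilon)$ this follows from the geometric estimate $k\le 2|\Omega'|=2|\Omega|/\rho^d$, which reproduces exactly the constants $\frac{20}{\alpha^2}$ and $\frac{4}{\varepsilon}$ in the statement.

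The crux, and the main obstacle, is this counting bound. Splitting the cubes meeting $\Omega'$ into those contained in $\Omega'$ and those meeting $\partial\Omega'$, the first family consists of disjoint unit-volume cubes inside $\Omega'$, so there are at most $|\Omega'|$ of them. For the boundary cubes I would invoke Ahlfors regularity: choosing a point $p_\ell\in\partial\Omega'\cap(\ell+[-1/2,1/2]^d)$ in each and a fixed radius $r_0$ of order $1$, the lower regularity bound gives $\cH^{d-1}(\partial\Omega'\cap B_{r_0}(p_\ell))\ge\kappa_{\partial\Omega}\,r_0^{d-1}$, while the balls $B_{r_0}(p_\ell)$ have overlap bounded by a dimensional constant since the $p_\ell$ lie in distinct unit cubes. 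Summing, the number of boundary cubes is at most $\frac{C_d}{\kappa_{\partial\Omega}}\cH^{d-1}(\partial\Omega')=\frac{C_d}{\kappa_{\partial\Omega}}\rho^{-(d-1)}\cH^{d-1}(\partial\Omega)$, and with the prescribed $\rho=\frac{\kappa_{\partial\Omega}}{C_d}\frac{|\Omega|}{\cH^{d-1}(\partial\Omega)}$ this equals $\rho^{-d}|\Omega|=|\Omega'|$, giving $k\le 2|\Omega'|$. The points requiring care are the admissible range of radii in the regularity hypothesis (forcing a mild nondegeneracy such as $\cH^{d-1}(\partial\Omega')\gtrsim 1$) and the exact value of the overlap constant $C_d$; both are routine once the covering is fixed, and the small-$\Omega'$ regime can be dispatched separately, as $\Omega'$ then meets only a controlled number of cubes.
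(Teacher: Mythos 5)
Your route is essentially the paper's: a unitary dilation reduces \eqref{eq:sampling-ineq-general} to integer-lattice sampling inequalities for $PW_{\Omega'}$, $\Omega'=\rho^{-1}\Omega$; the spectrum is enlarged to the smallest union of unit cubes with centers in $\Z^d$ (this is exactly the paper's $\Omega_\rho$, rescaled); Corollary \ref{coro:union-cubes} is invoked together with the monotonicity of $x\mapsto x\log(2x/\varepsilon)$; and everything reduces to the cube count $k\le 2|\Omega|/\rho^d$. The only methodological difference is that the paper imports this count from a covering lemma (\cite[Lemma 2.1]{MRS23}, based on \cite{Ca05}), which gives $|\Omega_\rho\setminus\Omega|\le \frac{C_d}{\kappa_{\partial\Omega}}\cH^{d-1}(\partial\Omega)\,\rho$, whereas you prove it directly by splitting into interior and boundary cubes and using a bounded-overlap argument with the Ahlfors lower bound. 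In the main regime your direct argument is a correct, self-contained substitute for the citation.

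There is, however, a genuine gap in how you handle the degenerate regime, and your proposed dispatch of it fails. The Ahlfors hypothesis yields $\cH^{d-1}(\partial\Omega'\cap B_{r}(p))\ge \kappa_{\partial\Omega}r^{d-1}$ only for $r\le \cH^{d-1}(\partial\Omega')^{1/(d-1)}$, so your radius $r_0\sim 1$ is admissible only when $\cH^{d-1}(\partial\Omega')\gtrsim 1$. In the complementary case the best your method gives (taking $r_0=\cH^{d-1}(\partial\Omega')^{1/(d-1)}$) is that the number of boundary cubes is at most a constant $C_d/\kappa_{\partial\Omega}$, hence $k\le |\Omega'|+C_d/\kappa_{\partial\Omega}$. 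This does \emph{not} imply $k\le 2|\Omega'|$ when $|\Omega'|$ is small, and the failure is not merely one of constants: since $k\ge 1$, Corollary \ref{coro:union-cubes} always demands $m\ge \frac{10}{\alpha^2}\log(2/\varepsilon)$, while the theorem's hypothesis only guarantees $m\ge \frac{20}{\alpha^2}|\Omega'|\log(4|\Omega'|/\varepsilon)$, which tends to $0$ as $|\Omega'|\to 0$. So ``$\Omega'$ meets only a controlled number of cubes'' is not enough; what you actually need is that the degenerate regime is \emph{vacuous} for the prescribed $\rho$. This requires an extra ingredient you do not supply: from the definition of maximal Ahlfors regularity (take $r=\cH^{d-1}(\partial\Omega)^{1/(d-1)}$) one gets $\kappa_{\partial\Omega}\le 1$, and the isoperimetric inequality, combined with $P(\Omega)\le \cH^{d-1}(\partial\Omega)$ for the topological boundary, gives $\cH^{d-1}(\partial\Omega)^{d/(d-1)}\ge c_d\,|\Omega|$; together these yield
\begin{equation}
	\cH^{d-1}(\partial\Omega') \;=\; \left(\frac{C_d}{\kappa_{\partial\Omega}}\cdot \frac{\cH^{d-1}(\partial\Omega)^{d/(d-1)}}{|\Omega|}\right)^{d-1} \;\ge\; \left(c_d\,C_d\right)^{d-1},
\end{equation}
which exceeds $1$ once the ``adequate'' constant $C_d$ is chosen large enough. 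With this supplement your proof closes; without it, the case analysis is incomplete.
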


We prove Theorem \ref{coro:k-tiles} and Theorem \ref{teo general set} also in Section \ref{sec:thm3}.

\medskip

\subsection{Related literature}
Non-uniform periodic sampling goes back at least to Papoulis \cite{Pa77} and its characterization in terms of the Zak transform is part of the folklore of the theory of shift-invariant spaces \cite{HL06,Bow01}. In particular, the usefulness of periodic sampling for multi-band spectra is well known \cite{843002}. Our work is motivated by the investigation of (slight) oversampling in the periodic setting and in particular by the question of whether the use of random sampling patterns can save practitioners from checking delicate conditions such as the invertibility of Zak matrices (that is, matrices whose entries are Zak transforms) and the uniformity of such statements on underlying parameters. The point pattern that we study is very natural as a model for many common sampling practices. For example, it has been considered under the name of \emph{random interleaved sampling} in \cite{PTWY12} to study the performance of analog-to-digital conversion of functions bandlimited to the unit interval.	

The literature on random sampling is extensive. For example, \cite{SU96,CL97} investigate sampling in classical Paley-Wiener spaces by checking that deterministic conditions for sampling apply to certain random perturbations of the integer grid, while \cite{BG05,KR08} investigates random sampling of frequency-sparse multivariate trigonometric polynomials. The notion of \emph{relevant sampling} was introduced in \cite{BG10} to study Paley-Wiener spaces whose spectrum is a high-dimensional cube. In this setting, functions are sampled on a finite random set and the reconstruction problem is restricted to only those functions concentrated near the sampling domain, which facilitates the use of random matrix theory. This method has been streamlined in \cite{BG13} and extended to several other signal models including shift-invariant spaces \cite{FX19,YW13,GPS23,JZ23,JZ20} and more general reproducing kernel Hilbert spaces \cite{MR4241985}. In contrast, our work provides sampling inequalities valid in the full (infinite dimensional) Paley-Wiener space, and are efficient even if the spectrum has multiple connected components. At the technical level we also draw on the argument of \cite{BG13}. Crucially, we do not argue directly on the signal domain, but rather on the (compact) Zak domain, which also allows us to transparently keep track of multi-component frequency profiles, c.f. \eqref{eq:phi-to-psi}.

Whereas the sampling set produced in Theorem \ref{thm:main-thm} is considered of high-quality because it simultaneously achieves stability constants close to each other (snugness) and a sampling density close to Landau's benchmark \eqref{eq_i3} (low redundancy), more classical work prioritizes the latter quality criterion. The minimal redundancy is achieved by \emph{complete interpolating} systems, and non-uniform periodic constructions go back to at least \cite{LS97}. Borrowing inspiration from \cite{Far90}, such constructions were later extended to $d>1$ in \cite{Mar06}, to cover Paley-Wiener spaces whose spectrum is a finite union of equally sized cubes, and refined in \cite{Ko15} to cover arbitrary multi-tiles. On the Fourier domain, complete interpolating sequences for $PW_\Omega$ are the frequency sets of Riesz bases of complex exponentials in $L^2(\Omega)$. In this formulation, the problem of existence of bases or low redundancy systems of exponentials has attracted significant attention; see, e.g.,  \cite{OU08,MM08, GL14,KN15,DL22,AAC15,CC18,CHM21}.

Finally, we mention the remarkable article \cite{MR3415581}, where, as a technical step towards the main result, it is proved that the Paley-Wiener space with spectrum as in \eqref{eq_i2} admits a sampling set with both frame bounds $\approx k$ and Beurling density $\approx k$ (whereas Theorem \ref{thm:main-thm} only achieves $\approx k \log(k)$). While the existence of the sampling set is derived from the Marcus, Spielman and Srivastava solution to the paving conjecture \cite{MSS15}, and therefore lacks the simplicity of the random pattern in Theorem \ref{thm:main-thm}, the investigation of algorithmic variants of this approach is an active field of research \cite{BAU23}.

\section{Proof of the main theorem}\label{sec:thm1}

We will prove Theorem \ref{thm:main-thm} by following a series of steps.
Throughout this section, $\Phi = \{\phi_1,\dots,\phi_k\}\subset L^2(\R^d)$ is a set of functions such that $E(\Phi)$ is an orthonormal system, and $\psi_1,\dots,\psi_k$ are the functions introduced in Condition \ref{item:cond-0}, which are assumed to satisfy Conditions \ref{item:cond-1}-\ref{item:cond-3}. We use the notation \eqref{eq:SIS} for the shift-invariant spaces spanned by $\Phi$.

\begin{rema}\label{rem:Zak}
	If $f\in L^2(\R^d)$, then $\{f(x+\ell)\}_{\ell\in\Z^d}\in\ell^2(\Z^d)$ for a.e. $x\in\R^d$ and thus $Z_f(x,\w)$ is defined almost everywhere on $\R^{2d}$.
	Whenever $\{f(x+\ell)\}_{\ell\in\Z^d} \in \ell^2(\Z^d)$ \textit{for every} $x\in\R^d$, the Zak transform $Z_f(x,\w)$ is defined for every $x\in\R^d$ and for a.e. $\w\in\R^d$. If we also have that
	$$
	\sum_{\ell\in\Z^d} \hat{f}(\cdot+\ell)\in L^2([-1/2,1/2]^d),
	$$ 
	then 
	\begin{equation}\label{eq:Zak-Fourier}
		Z_f(x,\w) = e^{2\pi i \langle x, \w\rangle} Z_{\hat{f}}(\w,-x),\quad \forall x\in\R^d,\, \text{for a.e. }\w\in\R^d.
	\end{equation}
\end{rema}

In Condition \ref{item:cond-3} we implicitly assume that the Zak transforms are defined on every $x\in [-1/2,1/2]^d$. This is justified by Conditions \ref{item:cond-1} and \ref{item:cond-2}. Indeed, these conditions assert that $\{\psi_i(x+\ell)\}_{\ell\in\Z^d}\in \ell^2(\Z^d)$ for every $x\in\R^d$ and ${\sum_{\ell\in\Z^d} \hat{\psi}_i(\cdot+\ell)\in L^2([-1/2,1/2]^d)}$. Thus, $Z_{\psi_i}(x,\w)$ is well defined \textit{for every} $x\in\R^d$ and for a.e. $\w\in\R^d$, and \eqref{eq:Zak-Fourier} holds for $f=\psi_i$, for $i=1,\dots,k$, for every $x\in\R^d$ and for a.e. $\w\in\R^d$. In light of Condition \ref{item:cond-0}, we conclude that for every $x\in\R^d$ and for a.e. $\w\in\R^d$,
\begin{equation}\label{eq:phi-to-psi}
	Z_{\hat{\phi}_i} (\w,x) = e^{2\pi i \langle x,\ell_i\rangle}  Z_{\hat\psi_i}(\w,x).
\end{equation}

Furthermore, Condition \ref{item:cond-1} says that all $\phi_i$ have bounded quadratic periodization in time. As a consequence, $V=S(\Phi)$ is a \emph{reproducing kernel Hilbert space}. Indeed, for $f\in V$ and $x\in\R^d$,
\begin{align*}
	|f(x)|&\leq \sum_{\ell\in\Z^d}\sum_{i=1}^{k} |\langle f,\phi_i(\cdot-\ell)\rangle | |\phi_i(x-\ell)| \\ 
	&\leq (kD)^{\frac{1}{2}} \bigg(\sum_{\ell\in\Z^d}\sum_{i=1}^{k} |\langle f,\phi_i(\cdot-\ell)\rangle |^2 \bigg)^{\frac{1}{2}}  
	\\&= (kD)^{\frac{1}{2}} \|f\|,
\end{align*}
where $D:=\sup_{x\in[-1/2,1/2]^d} \sum_{\ell\in\Z^d} |\phi_i(x-\ell)|^2$.

\medskip

For a set of points $X= \{x_1,\dots, x_m\}\subset [-1/2,1/2]^d$ and for a.e. $\w\in [-1/2,1/2]^d$, we define the Zak matrix $Z_\Phi(X,\w)\in\C^{k\times m}$ as
\begin{equation}\label{eq:matrix-Zac}
	\left( Z_\Phi (X,\w) \right)_{i,j} = Z_{\phi_i}(x_j,\w),\quad i \in \{1,\dots, k\}, j\in \{1,\dots,m\}.
\end{equation}

The following proposition connects the sampling property of $X+\Z^d$ in $V=S(\Phi)$ with the associated Zak matrix.
\begin{prop}\label{propo:sampling-condition}
	The sampling inequalities
	\begin{equation}\label{eq:sampling-inequalities-AB}
		A\|f\|^2 \leq \sum_{r=1}^m \sum_{\ell\in\Z^d} |f(x_r+\ell)|^2 \leq B\|f\|^2,\quad \forall \,f\in V,
	\end{equation}
	hold for certain constants $A,B>0$ if and only if for a.e. $\w\in [-1/2,1/2]^d$,
	\begin{equation}\label{eq:sampling-ineq-Zak}
		A\|a\|^2\leq \left\langle \overline{Z_\Phi(X,\w)} Z_\Phi(X,\w)^t a,a\right\rangle \leq B\|a\|^2,\quad \forall a\in\C^k.
	\end{equation}
\end{prop}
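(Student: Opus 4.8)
The plan is to reduce the sampling inequalities on the signal domain to an equivalent statement on the Zak domain, using the fact that the Zak transform is an isometry that diagonalizes shift-invariant structure. First I would recall that for $f \in V = S(\Phi)$, since $E(\Phi)$ is an orthonormal basis of $V$, we may write $f = \sum_{i=1}^k \sum_{\ell \in \Z^d} c_{i,\ell}\, \phi_i(\cdot - \ell)$ with $\|f\|^2 = \sum_{i,\ell} |c_{i,\ell}|^2$. The key identity is that the Zak transform converts these integer translates into modulations: $Z_{\phi_i(\cdot - \ell)}(x,\w) = e^{-2\pi i \langle \ell, \w\rangle} Z_{\phi_i}(x,\w)$, so that $Z_f(x,\w) = \sum_{i=1}^k a_i(\w)\, Z_{\phi_i}(x,\w)$, where $a_i(\w) = \sum_{\ell} c_{i,\ell}\, e^{-2\pi i \langle \ell,\w\rangle}$ is the Fourier series of the coefficient sequence. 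By Plancherel for Fourier series, the map $\{c_{i,\ell}\}_\ell \mapsto a_i$ is unitary from $\ell^2(\Z^d)$ onto $L^2([-1/2,1/2]^d)$, so $\|f\|^2 = \sum_{i=1}^k \int_{[-1/2,1/2]^d} |a_i(\w)|^2\, d\w = \int_{[-1/2,1/2]^d} \|a(\w)\|^2\, d\w$, where $a(\w) = (a_1(\w),\dots,a_k(\w))^t \in \C^k$.

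Next I would handle the right-hand sampling sum. For each fixed $r$, the quasi-periodization $\sum_{\ell \in \Z^d} |f(x_r+\ell)|^2$ is exactly $\int_{[-1/2,1/2]^d} |Z_f(x_r,\w)|^2\, d\w$, since $\w \mapsto Z_f(x_r,\w)$ is (up to the standard normalization) the Fourier series whose coefficients are $\{f(x_r+\ell)\}_\ell$, and Parseval applies. Summing over $r$ and inserting the expansion $Z_f(x_r,\w) = \sum_{i} a_i(\w) Z_{\phi_i}(x_r,\w) = \bigl(Z_\Phi(X,\w)^t a(\w)\bigr)_r$ gives
\begin{equation*}
	\sum_{r=1}^m \sum_{\ell \in \Z^d} |f(x_r+\ell)|^2 = \int_{[-1/2,1/2]^d} \bigl\| Z_\Phi(X,\w)^t a(\w) \bigr\|^2\, d\w = \int_{[-1/2,1/2]^d} \bigl\langle \overline{Z_\Phi(X,\w)}\, Z_\Phi(X,\w)^t a(\w),\, a(\w) \bigr\rangle\, d\w.
\end{equation*}
Thus both sides of \eqref{eq:sampling-inequalities-AB} have been rewritten as integrals over the Zak domain of the quadratic forms appearing in \eqref{eq:sampling-ineq-Zak}, paired against the same vector field $a(\w)$.

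For the direction \eqref{eq:sampling-ineq-Zak} $\Rightarrow$ \eqref{eq:sampling-inequalities-AB}, I would simply integrate the pointwise inequalities against $a(\w)$ and use the two displayed identities; this is immediate. The converse is the step I expect to require the most care. The issue is that \eqref{eq:sampling-inequalities-AB} only gives control of the \emph{integrated} quadratic form against vector fields $a(\w)$ of the special form arising from $\ell^2$ coefficient sequences. To recover the \emph{pointwise} bound \eqref{eq:sampling-ineq-Zak} I would argue by contradiction and localization: if, say, the lower bound in \eqref{eq:sampling-ineq-Zak} failed on a positive-measure set of frequencies $\w$ for some unit vector, one can, by a measurable-selection argument together with the density of trigonometric polynomials (or step functions) in $L^2([-1/2,1/2]^d)$, construct a coefficient vector field $a(\w)$ concentrated on that set and pointing in the offending direction, yielding a function $f \in V$ that violates \eqref{eq:sampling-inequalities-AB}. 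The main obstacle is thus the measurable dependence on $\w$: one must choose the eigenvector realizing the extreme eigenvalue of the Hermitian matrix $\overline{Z_\Phi(X,\w)}\, Z_\Phi(X,\w)^t$ measurably in $\w$, which is handled by standard measurable selection for eigenprojections of a measurable family of Hermitian matrices, and then approximate the resulting (possibly non-bandlimited) vector field by admissible ones. Once this localization is in place, the equivalence of the essential infimum/supremum of the pointwise quadratic form with the best constants $A,B$ in the integrated inequality follows in the usual way.
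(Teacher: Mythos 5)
Your proposal is correct in outline, but it takes a genuinely different route from the paper's. You prove the fiberization equivalence from scratch: expand $f$ in the orthonormal basis $E(\Phi)$, encode the coefficient sequences as a vector field $a(\w)\in\C^k$ of Fourier series, and apply Parseval twice to obtain $\|f\|^2=\int\|a(\w)\|^2\,d\w$ and $\sum_{r}\sum_{\ell}|f(x_r+\ell)|^2=\int\langle \overline{Z_\Phi(X,\w)}Z_\Phi(X,\w)^t a(\w),a(\w)\rangle\,d\w$; the difficult direction (integrated bounds imply pointwise bounds a.e.) is then handled by contradiction via measurable selection of extremal eigenvectors. The paper instead observes that \eqref{eq:sampling-inequalities-AB} says exactly that the translated reproducing kernels $\{K_{x_r}(\cdot-\ell)\}$ form a frame of $V$ with bounds $A,B$, factorizes the associated dual Gramian through the Zak matrix, and quotes the fiberization characterization of frames of shift-invariant systems (\cite{RS95}, \cite[Theorem 2.5]{Bow01}); that cited theorem is precisely the integrated-versus-pointwise equivalence that you establish by hand. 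Your version is more elementary and self-contained; the paper's is shorter and outsources the measurability and localization technicalities to the literature. Two refinements you should make. First, your two key identities are immediate only for finite linear combinations of the $\phi_i(\cdot-\ell)$; for general $f\in V$ they require a limiting argument, e.g. partial sums $f_N$ converge to $f$ pointwise (by the reproducing kernel property), while the expansions $\sum_i a_i^N(\w)Z_{\phi_i}(x_r,\w)$ converge in $L^2_\w$ because $|Z_{\phi_i}(x_r,\w)|=|Z_{\hat\psi_i}(\w,-x_r)|\leq C$ for a.e. $\w$ by Condition \ref{item:cond-2}; matching Fourier coefficients in the limit then yields the identity for $f$. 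Second, in the converse no density or approximation step is needed: since $\{c_{i,\ell}\}_\ell\mapsto a_i$ is a unitary bijection of $\ell^2(\Z^d)$ onto $L^2([-1/2,1/2]^d)$, any measurable field of unit eigenvectors supported on the exceptional set already corresponds to an admissible $f\in V$, so the appeal to trigonometric polynomials only obscures the argument.
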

The proposition is folklore in the theory of sampling in shift-invariant spaces (see e.g. \cite{Rom06}), but we provide a brief proof for completeness. The proof only uses the fact that $V$ is a reproducing kernel Hilbert space and $E(\Phi)$ is an orthonormal basis of $V$.
\begin{proof}
	For a function $f\in L^2(\R^d)$, its \emph{fiber} at $\w$,
	\begin{equation}\label{eq:fiber}
		\mathcal{T}f(\w):=\{\hat{f}(\w+\ell)\}_{\ell\in\Z^d},\quad\w\in[-1/2,1/2]^d,
	\end{equation} belongs to $\ell^2(\Z^d)$ for a.e. $\w\in [-1/2,1/2]^d$. 	
	
	Denote by $K_x\in V$  the reproducing kernel at the point $x\in\R^d$. Since $K_x(\cdot - \ell) = K_{x+\ell}$ for every $\ell\in\Z^d$, the sampling inequalities \eqref{eq:sampling-inequalities-AB} mean that  ${E(K):=\{K_{x_r}(\cdot-\ell)\,:\, r=1,\dots,m, \,\ell\in\Z^d\}}$ is a frame of $V$ with frame bounds $A,B$.
	
	The reproducing kernels $K_{x_r}$ can be expanded in
	the orthonormal basis $E(\Phi)$ 
	as
	\begin{equation}\label{eq:expression-kernel}
		K_{x_r}(y) = \sum_{\ell\in\Z^d} \sum_{i=1}^k \overline{\phi_i(x_r+\ell)} \phi_i(y+\ell), \quad y\in\R^d.
	\end{equation}
	With the notation
	\begin{equation}
	\hat{K}(\w) := \left(\mathcal T K_{x_1}(\w),\dots,\mathcal T K_{x_m}(\w)\right)\in\C^{\Z^d\times \{1,\dots,m\}}, 
	\end{equation}
	and \begin{equation}
		\hat{\Phi}(\w) := \left(\mathcal T \phi_{1}(\w),\dots,\mathcal T \phi_{k}(\w)\right)\in\C^{\Z^d\times \{1,\dots,k\}},
	\end{equation}
	$\w\in [-1/2,1/2]^d,$
	the expansion \eqref{eq:expression-kernel} reads
	$$\hat{K}(\w) = \hat{\Phi}(\w) \overline{Z_\Phi(X,\w)},
	$$
	where $\hat{K}(\w)\in \ell^2(\Z^d)^m$ and $\hat{\Phi}(\w) \in \ell^2(\Z^d)^k$ for a.e. $\w\in[-1/2,1/2]^d$.
	The \textit{dual gramian} associated with $E(K)$ is defined as the infinite matrix $\widetilde{G}_K(\w) := \hat{K}(\w) \hat{K}(\w)^*\in \C^{\Z^d\times \Z^d}$, and thus admits the factorization 
	$$\widetilde{G}_K(\w) = \hat{\Phi}(\w) \overline{Z_\Phi(X,\w)} Z_\Phi(X,\w)^t  \hat{\Phi}(\w)^*.$$
	By the theory of shift-invariant spaces (see e.g. \cite{RS95} or \cite[Theorem 2.5]{Bow01}), $E(K)$ is a frame of $V$ with bounds $A,B$ if and only if 
	\begin{align}\label{eq:bounds-G_K}
		\MoveEqLeft A\|c\|^2 \leq \langle \widetilde{G}_K(\w) c,c \rangle \leq B\|c\|^2,\\
		&\qquad\qquad\forall \,c\in \text{span}\{\mathcal{T}\phi_1(\w),\dots,\mathcal{T}\phi_k(\w)\},\\
		&\qquad\qquad\qquad\qquad \text{	for a.e. }\w\in [-1/2,1/2]^d.
	\end{align}
	Now, for ${c\in \text{span}\{\mathcal{T}\phi_1(\w),\dots,\mathcal{T}\phi_k(\w)\}}$,
	\begin{align*}
		\langle \widetilde{G}_K(\w) c,c \rangle &= \left\langle \hat{\Phi}(\w) \overline{Z_\Phi(X,\w)} Z_\Phi(X,\w)^t  \hat{\Phi}(\w)^* c,c\right\rangle \\
		& = \left\langle \overline{Z_\Phi(X,\w)} Z_\Phi(X,\w)^t \,\hat{\Phi}(\w)^*c,  \hat{\Phi}(\w)^* c\right\rangle.
	\end{align*}
	To conclude the proof, we observe that $\hat{\Phi}(\w)^*\left(\text{span}\{\mathcal{T}\phi_1(\w),\dots,\mathcal{T}\phi_k(\w)\}\right) = \C^k$. Indeed, since $E(\Phi)$ is an orthonormal basis, then $\{\mathcal{T}\phi_1(\w),\dots,\mathcal{T}\phi_k(\w)\}$ is an orthonormal system in $\ell^2(\Z^d)$ for a.e. $\w\in [-1/2,1/2]^d$.
\end{proof}

To prove the probability estimates for the sampling inequalities in \eqref{eq:sampling-ineq-m}, we will make use of Proposition \ref{propo:sampling-condition} and the matrix Bernstein inequality, which we quote from \cite{Tro12}. Here, and henceforth, for a matrix $X\in \C^{k\times k}$ we write
	$$\|X\|:=\sup\left\{\|Xa\|_2\,:\, a\in\C^k,\, \|a\|_2=1\right\}.$$

\begin{theo}\label{thm:bernstein}
	Let $\{X_1,\dots,X_m\}$ be a sequence of independent, random self-adjoint $k\times k$ matrices. Suppose that
	\begin{equation}
		\mathbb{E}X_r = 0 \quad\text{and} \quad \|X_r\|\leq B\quad \text{a.s.},\quad \forall \,r=1,\dots,m.
	\end{equation}
	and let
	\begin{equation}
		\sigma^2 = \Big\|\sum_{r=1}^m \mathbb E(X_r^2)\Big\|.
	\end{equation}
	Then for all $\nu\geq 0$
	\begin{equation}
		\mathbb P\bigg(\lambda_{\max}\bigg(\sum_{r=1}^{m}X_r\bigg)\geq \nu\bigg)\leq k \exp{\left(-\frac{\nu^2/2}{\sigma^2+B\nu/3}\right)}.
	\end{equation}
\end{theo}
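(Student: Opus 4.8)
The plan is to prove this by the matrix Laplace transform (Chernoff) method carried out in the semidefinite order. Writing $Y=\sum_{r=1}^m X_r$, the first step is the observation that for any $\theta>0$,
\[
\P\big(\lambda_{\max}(Y)\geq\nu\big)=\P\big(e^{\theta\lambda_{\max}(Y)}\geq e^{\theta\nu}\big)\leq e^{-\theta\nu}\,\E\,e^{\theta\lambda_{\max}(Y)}\leq e^{-\theta\nu}\,\E\,\mathrm{tr}\,e^{\theta Y},
\]
combining Markov's inequality with the elementary bound $e^{\theta\lambda_{\max}(Y)}=\lambda_{\max}(e^{\theta Y})\leq\mathrm{tr}\,e^{\theta Y}$, valid because $e^{\theta Y}$ is positive definite. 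Everything then reduces to controlling the trace of the matrix moment generating function $\E\,\mathrm{tr}\,\exp(\theta Y)$ and optimizing over $\theta$ at the very end.

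The decisive step is a subadditivity property of the matrix cumulant generating function. Using independence and the master tail bound of the matrix Laplace transform method — which rests on Lieb's concavity theorem that $A\mapsto\mathrm{tr}\,\exp(H+\log A)$ is concave on positive-definite matrices — I would obtain
\[
\E\,\mathrm{tr}\,\exp\Big(\theta\sum_{r=1}^m X_r\Big)\leq\mathrm{tr}\,\exp\Big(\sum_{r=1}^m\log\E\,e^{\theta X_r}\Big).
\]
Next I would bound each factor $\E\,e^{\theta X_r}$. Since $\|X_r\|\leq B$, the eigenvalues of $X_r$ lie in $[-B,B]$, and the scalar inequality $e^{\theta x}\leq 1+\theta x+g(\theta)x^2$ with $g(\theta)=(e^{\theta B}-\theta B-1)/B^2$, valid for $|x|\leq B$, transfers to the semidefinite order by the spectral theorem, giving $e^{\theta X_r}\preceq I+\theta X_r+g(\theta)X_r^2$. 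Taking expectations and using $\E X_r=0$ yields $\E\,e^{\theta X_r}\preceq I+g(\theta)\,\E(X_r^2)$, and then the operator inequality $\log(I+A)\preceq A$ gives $\log\E\,e^{\theta X_r}\preceq g(\theta)\,\E(X_r^2)$.

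Combining the two displays and using monotonicity of the trace exponential in the semidefinite order, I would arrive at
\[
\E\,\mathrm{tr}\,\exp(\theta Y)\leq\mathrm{tr}\,\exp\Big(g(\theta)\sum_{r=1}^m\E(X_r^2)\Big)\leq k\,\exp\Big(g(\theta)\,\big\|\textstyle\sum_{r=1}^m\E(X_r^2)\big\|\Big)=k\,e^{g(\theta)\sigma^2},
\]
where the middle inequality bounds the trace of a positive-definite matrix by $k$ times its largest eigenvalue, and $g(\theta)\geq0$ lets the norm pass through. Feeding this into the Laplace bound gives $\P(\lambda_{\max}(Y)\geq\nu)\leq k\,\inf_{\theta>0}\exp(-\theta\nu+g(\theta)\sigma^2)$. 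The last step is a purely scalar optimization: using the standard estimate $g(\theta)\leq\tfrac{\theta^2/2}{1-B\theta/3}$ for $0\leq\theta<3/B$ and choosing $\theta=\nu/(\sigma^2+B\nu/3)$ produces the stated exponent $-\tfrac{\nu^2/2}{\sigma^2+B\nu/3}$.

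I expect the main obstacle to be the subadditivity step, since it is precisely where noncommutativity obstructs a naive scalar argument: the passage $\E\,\mathrm{tr}\,e^{\sum\theta X_r}\leq\mathrm{tr}\,e^{\sum\log\E\,e^{\theta X_r}}$ cannot be obtained by iterating a Golden–Thompson bound and instead requires the full strength of Lieb's concavity theorem applied through Jensen's inequality. The remaining ingredients — the spectral transfer of scalar bounds to the semidefinite order, the estimate $\log(I+A)\preceq A$, and the final scalar optimization — are routine once this is in place.
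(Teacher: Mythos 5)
This statement is not proved in the paper at all: it is quoted verbatim as a known result, the matrix Bernstein inequality, with a citation to Tropp's work \cite{Tro12}. Your proposal is a correct outline of exactly the argument in that cited source --- the matrix Laplace transform bound via $\mathrm{tr}\,e^{\theta Y}$, the subadditivity of the matrix cumulant generating function through Lieb's concavity theorem (where, as you rightly note, iterating Golden--Thompson would only yield the weaker Ahlswede--Winter variance $\sum_r\|\E(X_r^2)\|$ in place of $\sigma^2=\|\sum_r\E(X_r^2)\|$), the semidefinite moment bound $\log\E\,e^{\theta X_r}\preceq g(\theta)\,\E(X_r^2)$, and the scalar optimization with $\theta=\nu/(\sigma^2+B\nu/3)$, which indeed reproduces the stated exponent. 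So there is nothing to compare against within the paper itself; your proof is sound and matches the standard proof the authors implicitly rely on.
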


From now on, we consider a set $X=\{x_1,\dots,x_m\}$ of independent random variables that are uniformly distributed in  $[-1/2,1/2]^d$. Associated with this set, we have the family of $k\times k$ random matrices
\begin{equation}\label{eq:T(w)}
	T(\w) := \overline{Z_\Phi(X,\w)} Z_\Phi(X,\w)^t,
\end{equation}
defined for a.e. $\w\in [-1/2,1/2]^d.$
Additionally, for  $r=1,\dots, m$ and a.e. $\w\in[-1/2,1/2]^d$, let $T_r(\w)\in\C^{k\times k}$ be the random matrix with entries
\begin{equation}\label{eq:T_r}
	(T_r(\w))_{i,j} :=  \overline{Z_{\phi_i}(x_r,\w)}Z_{\phi_j}(x_r,\w),\quad i,j\in\{1,\dots,k\}.
\end{equation} 
Then $T_r(\w)$ is positive semi-definite and $$T(\w) =\sum_{r=1}^m T_r(\w).$$
By \eqref{eq:Zak-Fourier} we can express $T_r(\w)$ in terms of the Zak transform of $\hat{\phi}_i$:
for every $i,j\in\{1,\dots,k\}$,
\begin{align*}
	(T_r(\w))_{i,j} =  \overline{Z_{\hat\phi_i}(\w,-x_r)}Z_{\hat\phi_j}(\w,-x_r),
\end{align*} 
for a.e. $\w\in [-1/2,1/2]^d$.
Finally, we introduce the matrices
$$X_r(\w) := T_r(\w) - I_k,\quad  r=1,\dots,m,$$
where $T_r(\w)$ is given by \eqref{eq:T_r} and $I_k$ denotes the identity matrix in $\C^{k\times k}$.

\begin{lemma}\label{lem:ineq-X_r} 
	For every $r=1,\dots,m$ and for a.e. ${\w\in [-1/2,1/2]^d}$ the following holds:
	\begin{enumerate}
		\item[(a)] $\mathbb E X_r(\w)=0,$
		\item[(b)] $\left\|X_r(\w)\right\| \leq kC^2 + 1$ \,a.s.,  and
		\item[(c)] $\sigma^2=\left\|\sum_{r=1}^m \mathbb E(X_r(\w)^2)\right\|\leq m  (kC^2-1).$
	\end{enumerate}	
\end{lemma}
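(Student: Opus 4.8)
The plan is to exploit the rank-one structure of each $T_r(\w)$. Writing $v_r(\w)\in\C^k$ for the vector with entries $(v_r(\w))_i=\overline{Z_{\phi_i}(x_r,\w)}$, definition \eqref{eq:T_r} gives $T_r(\w)=v_r(\w)\,v_r(\w)^*$, a positive semidefinite rank-one matrix whose only nonzero eigenvalue is $\|v_r(\w)\|^2=\sum_{i=1}^k|Z_{\phi_i}(x_r,\w)|^2$. The single scalar bound that drives the whole lemma is
$$\|v_r(\w)\|^2\leq kC^2 \quad\text{a.s., for a.e. }\w,$$
which follows from \eqref{eq:phi-to-psi} together with Condition \ref{item:cond-2}: indeed $|Z_{\phi_i}(x_r,\w)|=|Z_{\hat\psi_i}(\w,-x_r)|\leq\sum_{\ell\in\Z^d}|\hat\psi_i(\w+\ell)|\leq C$. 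Since the $x_r$ are i.i.d., the matrices $X_r(\w)=T_r(\w)-I_k$ are identically distributed, so in (c) it suffices to estimate a single $\E(X_1(\w)^2)$ and multiply by $m$.

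For (a) I would compute $\E T_r(\w)$ entrywise. As $x_r$ is uniform on $[-1/2,1/2]^d$, using the Fourier form $(T_r(\w))_{i,j}=\overline{Z_{\hat\phi_i}(\w,-x_r)}Z_{\hat\phi_j}(\w,-x_r)$ and expanding each Zak transform as a Fourier series in $x_r$, the orthogonality of $\{e^{2\pi i\langle\ell,\cdot\rangle}\}_{\ell\in\Z^d}$ on $[-1/2,1/2]^d$ collapses the double sum to $\sum_{\ell\in\Z^d}\overline{\hat\phi_i(\w+\ell)}\hat\phi_j(\w+\ell)=\langle\mathcal{T}\phi_j(\w),\mathcal{T}\phi_i(\w)\rangle_{\ell^2(\Z^d)}$. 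Since $E(\Phi)$ is orthonormal, the fibers $\{\mathcal{T}\phi_i(\w)\}_{i=1}^k$ form an orthonormal system for a.e. $\w$ (as already used in the proof of Proposition \ref{propo:sampling-condition}), so this equals $\delta_{i,j}$; hence $\E T_r(\w)=I_k$ and $\E X_r(\w)=0$.

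For (b), the eigenvalues of $X_r(\w)=T_r(\w)-I_k$ are $\|v_r(\w)\|^2-1$ (simple) and $-1$ (multiplicity $k-1$); since $0\leq\|v_r(\w)\|^2\leq kC^2$, each has absolute value at most $kC^2+1$, giving $\|X_r(\w)\|\leq kC^2+1$. The crux is (c), where the difficulty is that the scalar $\|v_1(\w)\|^2$ and the matrix $T_1(\w)$ are correlated and cannot be decoupled inside the expectation. I would resolve this using the identity $T_1(\w)^2=\|v_1(\w)\|^2\,T_1(\w)$ afforded by the rank-one structure, which yields
$$X_1(\w)^2=(T_1(\w)-I_k)^2=\big(\|v_1(\w)\|^2-2\big)T_1(\w)+I_k.$$
Taking expectations and using $\E T_1(\w)=I_k$ from (a) gives $\E(X_1(\w)^2)=\E\big[\|v_1(\w)\|^2\,T_1(\w)\big]-I_k$. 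The pointwise operator inequality $\|v_1(\w)\|^2\,T_1(\w)\preceq kC^2\,T_1(\w)$ (valid since $kC^2-\|v_1(\w)\|^2\geq0$ and $T_1(\w)\succeq0$) is preserved by the expectation, so $\E(X_1(\w)^2)\preceq kC^2 I_k-I_k=(kC^2-1)I_k$. Combined with $\E(X_1(\w)^2)\succeq0$, this bounds $\|\E(X_1(\w)^2)\|\leq kC^2-1$, and summing over $r$ gives $\sigma^2\leq m(kC^2-1)$. I expect the decorrelation step in (c) to be the only genuinely nontrivial point; everything else reduces to the rank-one identities and the uniform bound $\|v_r(\w)\|^2\le kC^2$.
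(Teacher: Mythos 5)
Your proof is correct and follows essentially the same route as the paper's: the entrywise computation of $\mathbb{E}\,T_r(\w)=I_k$ via Fourier orthogonality for (a), the bound $\|T_r(\w)\|\leq kC^2$ for (b), and the operator inequality $T_r(\w)^2\preceq kC^2\,T_r(\w)$ combined with monotonicity of expectation for (c). Your explicit use of the rank-one structure $T_r(\w)=v_r(\w)v_r(\w)^*$ only makes exact what the paper obtains via the Frobenius-norm estimate and the positive-semidefinite inequality $T_r(\w)^2\preceq \|T_r(\w)\|\,T_r(\w)$, so the "decorrelation" step you single out is the same manipulation the paper performs.
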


\begin{proof}
	To prove (a) we show that $\mathbb{E}(T_r(\w))= I_k$
	for a.e. ${\w\in[-1/2,1/2]^d}$.
	To this end, let $i,j\in\{1,\dots, k\}$, and compute, for a.e. ${\w\in [-1/2,1/2]^d}$,
	\begin{align}
		\MoveEqLeft \mathbb{E} \bigg(T_r(\w)\bigg)_{i,j} 
		= \int_{[-\frac{1}{2},\frac{1}{2}]^d}  \overline{Z_{\hat\phi_i}(\w,-x)}Z_{\hat\phi_j}(\w,-x)\,dx\\
		&=\int_{[-\frac{1}{2},\frac{1}{2}]^d}  \bigg(\overline{\sum_{\ell\in\Z^d}\hat\phi_i(\w+\ell)\, e^{2\pi i \langle \ell,x\rangle}}\bigg) \\ 
		&\qquad\times\bigg(\sum_{\ell'\in\Z^d}\hat\phi_j(\w+\ell')\, e^{2\pi i \langle \ell',x\rangle}\bigg)  \,dx \\
		&=\sum_{\ell\in\Z^d}  \overline{\hat\phi_i(\w+\ell)}\,\hat\phi_j(\w+\ell) \\
		&=\big\langle\mathcal T \phi_j(\w),\mathcal T \phi_i(\w) \big\rangle,
	\end{align}
	where we used the notation from \eqref{eq:fiber}. Since $E(\Phi)$ is orthonormal, $\{\mathcal T \phi_1(\w),\dots, \mathcal T \phi_k(\w)\}$ is also orthonormal for a.e. $\w\in [-1/2,1/2]^d$, and the claim follows.
	
	Let us prove (b). By \eqref{eq:phi-to-psi} and Condition \ref{item:cond-2},
	\begin{align}
		\left\|T_r(\w)\right\|&\leq \bigg(\sum_{i=1}^k\sum_{j=1}^k \left| \overline{Z_{\hat\phi_i}(\w,-x_r)}Z_{\hat\phi_j}(\w,-x_r)\right|^2\bigg)^{1/2}\\
		&=\sum_{i=1}^k \left|Z_{\hat\psi_i}(\w,-x_r)\right|^2 
		\leq\sum_{i=1}^k \bigg(\sum_{\ell\in\Z^d} |\hat\psi_i(\w+\ell)|\bigg)^2 \\
		&\leq k C^2.\label{eq:bound-T_r}
	\end{align}
	Hence,
	\begin{align}
		\left\|X_r(\w)\right\| = \|T_r(\w) - I_k\|\leq \|T_r(\w)\| + 1
		&\leq k C^2 +  1.
	\end{align}
	To prove (c), we first note that since $T_r(\w)$ is a positive semi-definite matrix,
	$$T_r(\w)^2 \leq \|T_r(\w)\| T_r(\w).$$
	By \eqref{eq:bound-T_r}, $\|T_r(\w)\|\leq k C^2$ and therefore $T_r(\w)^2 \leq k C^2  T_r(\w)$ for a.e. $\w\in[-1/2,1/2]^d$. Since the expected value of a random positive semi-definite matrix is also positive semi-definite, 
	\begin{align}
		\mathbb{E} (T_r(\w)^2) \leq  k C^2 \,  \mathbb{E}(T_r(\w)) =  k C^2 I_k, 
	\end{align}
	for a.e. $\w\in[-1/2,1/2]^d$.
	Therefore,
	\begin{align}
		\mathbb E(X_r(\w)^2) = \mathbb E(T_r(\w)^2) - I_k
		\leq   k C^2  I_k - I_k,
	\end{align}
	and
	\begin{equation}
		\sigma^2=\bigg\|\sum_{r=1}^m \mathbb E(X_r(\w)^2)\bigg\|\leq m (k C^2 -1)
	\end{equation}
	for a.e. $\w\in[-1/2,1/2]^d$.
\end{proof}

We are ready to prove our main theorem.

\begin{proof}[Proof of Theorem \ref{thm:main-thm}]
	By Proposition \ref{propo:sampling-condition}, it is enough to show that, with probability at least $1-\eps$, \eqref{eq:sampling-ineq-Zak} hold for a.e. $\w\in [-1/2,1/2]^d$ with $A=m(1-\eps)$ and $B=m(1+\eps)$.
	
	\noindent {\bf Step 1}. \emph{(Concentration of measure)}. For each ${\w\in [-1/2,1/2]^d}$ where $T(\w)$ is defined, we apply Theorem \ref{thm:bernstein} with $\nu = m\alpha/2$ to obtain
	\begin{align}
		\MoveEqLeft\mathbb{P}\bigg( \min_{\|a\|_{\C^k}=1} \langle T(\w) a,a\rangle \leq m\left(1-\frac{\alpha}{2}\bigg)\right)\\ &\leq \mathbb{P} \bigg(  \lambda_{\min} \bigg( \sum_{r=1}^m X_r(\w) \bigg) \leq - \frac{m\alpha}{2} \bigg)
		\\
		&\leq k \exp\bigg(-\frac{m^2\alpha^2/8}{m (kC^2-1) + (kC^2+1)  m\alpha / 6}\bigg)\\ 
		&\leq k\exp\bigg(\frac{-\alpha^2m}{10kC^2}\bigg),
	\end{align} 
	and, similarly,
	\begin{align*}
		\MoveEqLeft\mathbb{P}\left( \max_{\|a\|_{\C^k}=1} \langle T(\w) a,a\rangle \geq m\left(1+\frac{\alpha}{2}\right)\right)\\ &\leq \mathbb{P} \left(  \lambda_{\max} \left( \sum_{r=1}^m X_r(\w) \right) \geq  \frac{m\alpha}{2} \right)\\
		&\leq k\exp\left(\frac{-\alpha^2m}{ 10 kC^2}\right).
	\end{align*}
	
	\noindent {\bf Step 2}. \emph{(Net bound)}. We now construct a net in $[-1/2,1/2]^d$ that avoids the zero measure set excluded by the essential suprema in Conditions \ref{item:cond-2} and \ref{item:cond-3}.
	
	More precisely, let $Z_1$ be the (zero measure) set of all ${\w\in[-1/2,1/2]^d}$ such that \[\sup_{i\in\{1,\dots,k\}} \sum_{\ell\in\Z^d} |\hat\psi_i(\w+\ell)| > C.\] In addition, let $E\subset [-1/2,1/2]^d\times [-1/2,1/2]^d$ be the (zero measure) set of all $(\w,\w')$ such that 
	$$
	\sup_{i\in\{1,\dots,k\}} \sup_{x\in[-1/2,1/2]^d} |Z_{\hat\psi_i}(\w,x)-Z_{\hat\psi_i}(\w',x)|> K\|\w-\w'\|_{\infty}.
	$$ 
	Let $Z_2\subset [-1/2,1/2]^d$ be the set of all $\w\in[-1/2,1/2]^d$ such that the measure of the $\w$-section 
	$$E_\w:=\{\w'\in [-1/2,1/2]^d \,:\, (\w,\w')\in E \}$$
	is not zero. Then, by Fubini, $Z_2$ has zero measure and so does $Z:=Z_1\cup Z_2$.
	
	Second, fix $\delta>0$, and $\eta>0$. We claim that there exists a net $\mathcal N_{\delta,\eta}\subset [-1/2,1/2]^d\setminus Z$ with the property that ${\#\mathcal N_{\delta,\eta} = \left\lceil\frac{1}{2\delta} \right\rceil^d}$ and for every $\w\in [-1/2,1/2]^d$ there exists $\w'\in \mathcal N_{\delta,\eta}$ such that $\|\w-\w'\|_\infty \leq \delta(1+\eta)$. Indeed, start with the set $A_\delta=2\delta \Z^d \cap [-1/2,1/2]^d$; if any point $a\in A_\delta$ belongs to $Z$, then we replace it with any other point taken from $[a-\delta\eta,a+\delta\eta]^d \cap [-1/2,1/2]^d \setminus Z$. The resulting set $\mathcal N_{\delta,\eta}$ satisfies the requirements.
	
	For $\w\in [-1/2,1/2]^d \setminus Z$, we select $\w'\in\mathcal N_{\delta,\eta}$ such that $\|\w-\w'\|_\infty\leq \delta(1+\eta)$ and estimate
	\begin{align}
		\MoveEqLeft\sup_{\|a\|_{\C^k} =1} \bigg| \langle (T(\w)-T(\w'))a,a \rangle \bigg| \\
		&\leq \bigg( \sum_{i,j=1}^{k} \bigg| \sum_{r=1}^m (T_r(\w))_{i,j} - (T_r(\w'))_{i,j} \bigg|^2 \bigg)^{\frac{1}{2}}\\
		&\leq \bigg( \sum_{i,j=1}^{k} \bigg( \sum_{r=1}^m \big|(T_r(\w))_{i,j} - (T_r(\w'))_{i,j}\big| \bigg)^2 \bigg)^{\frac{1}{2}}\\
		&=\bigg( \sum_{i,j=1}^{k} \bigg( \sum_{r=1}^m \big| \overline{Z_{\hat\psi_i}(\w,-x_r)}Z_{\hat\psi_j}(\w,-x_r)\\
		&\qquad\qquad\qquad-\overline{Z_{\hat\psi_i}(\w',-x_r)}Z_{\hat\psi_j}(\w',-x_r)\big| \bigg)^2 \bigg)^{\frac{1}{2}},\\ \label{eq:bound-diff-T(w)} 
	\end{align}
	where in the last equality we used \eqref{eq:phi-to-psi} (which, crucially, involves an exponential factor independent of $\w$).
	We now estimate the summands in \eqref{eq:bound-diff-T(w)}  using Conditions \ref{item:cond-2} and \ref{item:cond-3}:
	\begin{align}
		\MoveEqLeft\left| \overline{Z_{\hat\psi_i}(\w,-x_r)}Z_{\hat\psi_j}(\w,-x_r)-  \overline{Z_{\hat\psi_i}(\w',-x_r)}Z_{\hat\psi_j}(\w',-x_r)\right| \\
		&\leq |\overline{Z_{\hat\psi_i}(\w,-x_r)}| |Z_{\hat\psi_j}(\w,-x_r) - Z_{\hat\psi_j}(\w',-x_r)| \\
		&\quad + |Z_{\hat\psi_j}(\w',-x_r)| |\overline{Z_{\hat\psi_i}(\w,-x_r)} - \overline{Z_{\hat\psi_i}(\w',-x_r)}| \\
		&\,\leq 2C K \|\w-\w'\|_\infty.				
	\end{align}
	We combine this estimate with  \eqref{eq:bound-diff-T(w)} and we get
	\begin{equation}
		\sup_{\|a\|_{\C^k} =1}\left| \left\langle (T(\w)-T(\w'))a,a \right\rangle \right| \leq 2kmCK \delta(1+\eta).
	\end{equation}
	If $K=0$, then $T(\w) = T(\w')$ for a.e. $\w,\w'\in[-1/2,1/2]^d$, and we may skip the following steps until Equation \eqref{eq:bound-inequalities-a.e.w} simply replacing $\# \mathcal{N}_{\delta,\eta} =1$.  
	
	Suppose that $K\neq 0$, and let us set $$\delta=\min\left\{\frac{1}{2},\frac{\alpha}{4 kC K(1+\eta)}\right\}.$$  Recall that $\# \mathcal{N}_{\delta,\eta} = \big\lceil\frac{1}{2\delta} \big\rceil^d $ and thus
	\begin{equation}\label{eq:N_delta}
		\# \mathcal{N}_{\delta,\eta} \leq \bigg(\frac{2k C K(1+\eta)}{\alpha}  +1\bigg)^d.
	\end{equation}
	Moreover, for this choice of $\delta$, $$\sup_{\|a\|_{\C^k} =1}\big| \big\langle (T(\w)-T(\w'))a,a \big\rangle \big| \leq \frac{m\alpha}{2},$$ and therefore
	\begin{alignat}{2}
		 \min_{\w'\in \mathcal{N}_{\delta,\eta}} \min_{ \|a\|_{\C^k} = 1 }& \langle T(\w')a,a \rangle - \frac{m\alpha}{2} \\
		&\leq  \langle T(\w)a,a \rangle \\
		 &\leq \max_{\w'\in \mathcal{N}_{\delta,\eta}} \max_{ \|a\|_{\C^k} = 1 } \langle T(\w')a,a \rangle + \frac{m\alpha}{2}.
	\end{alignat}		
	Finally, we can compute the desired probability estimates. First,
	\begin{align}
		\MoveEqLeft\mathbb{P}\bigg( \underset{\w\in [-1/2,1/2]^d}{\text{\rm ess inf }} \min_{\|a\|_{\C^k}=1} \langle T(\w) a,a \rangle \leq m(1-\alpha) \bigg) \\
		&\leq \mathbb{P}\bigg(\min_{\w'\in\mathcal N_{\delta,\eta}}\min_{\|a\|_{\C^k}=1} \langle T(\w') a,a \rangle \leq m\bigg(1-\frac{\alpha}{2}\bigg) \bigg)\\
		&\leq \sum_{\w'\in\mathcal{N}_{\delta,\eta}} \mathbb{P}\bigg(\min_{\|a\|_{\C^k}=1} \langle T(\w') a,a \rangle \leq m\bigg(1-\frac{\alpha}{2}\bigg) \bigg)\\
		&\leq \# \mathcal{N}_{\delta,\eta}  \,k\exp\bigg(\frac{-\alpha^2m}{ 10 kC^2}\bigg).
	\end{align}
	In a similar manner,
	\begin{align}
		\MoveEqLeft\mathbb{P}\bigg( \underset{\w\in [-1/2,1/2]^d}{\text{\rm ess sup }} \max_{\|a\|_{\C^k}=1} \langle T(\w) a,a \rangle \geq m(1+\alpha) \bigg) \\
		&\leq \mathbb{P}\bigg(\max_{\w'\in\mathcal N_{\delta,\eta}}\max_{\|a\|_{\C^k}=1} \langle T(\w') a,a \rangle \geq m\left(1+\frac{\alpha}{2}\bigg) \right)\\
		&\leq \# \mathcal{N}_{\delta,\eta}  \,k\exp\bigg(\frac{-\alpha^2m}{ 10 kC^2}\bigg).
	\end{align}
	Therefore, recalling \eqref{eq:N_delta},
	\begin{align}\label{eq:bound-inequalities-a.e.w}
		\MoveEqLeft\mathbb{P}\bigg(\text{For a.e. } \w\in [-1/2,1/2]^d,\, \forall \, a\in\C^k,\, \|a\|_{\C^k} =1:\\
		& \,m(1-\alpha) \leq \langle T(\w) a,a \rangle \leq m(1+\alpha) \bigg) \\
		&\qquad\leq 1- 2\,\# \mathcal{N}_{\delta,\eta}  \, k\exp\left(\frac{-\alpha^2m}{ 10 kC^2}\right)
		\\
		&\qquad\leq 1- 2\,\bigg(\frac{2k C K(1+\eta)}{\alpha}  +1\bigg)^d  \, k\exp\left(\frac{-\alpha^2m}{ 10 kC^2}\right).
	\end{align}
	Letting $\eta \to 0^+$, we conclude that if
	\begin{equation}
		m\geq10 \cdot \frac{C^2}{\alpha^2}\cdot k\cdot  \log\left( \frac{2k}{\varepsilon} \left( \frac{2k C K}{\alpha}  +1\right)^d\right),
	\end{equation}
	then, with probability at least $1-\varepsilon$, the condition in \eqref{eq:sampling-ineq-Zak} is satisfied, and therefore the sampling inequalities \eqref{eq:sampling-ineq-m} hold.
\end{proof}

\section{Proofs of Corollary \ref{coro:union-cubes}, Theorem \ref{coro:k-tiles}, and Theorem \ref{teo general set}}\label{sec:thm3}

\begin{proof}[Proof of Corollary \ref{coro:union-cubes}]
	Let us check that the functions ${\psi_i:=\psi}$ defined by $\hat\psi = \chi_{[-1/2,1/2]^d}$ satisfy Conditions \ref{item:cond-1}-\ref{item:cond-3} in Theorem \ref{thm:main-thm}. First,
	$$\psi(x) = \prod_{n=1}^{d} \frac{\sin\left(\pi x(n)\right)}{\pi x(n)},
	\quad x=\left(x(1),\dots,x(d)\right),
	$$
	so, $\psi$ satisfies Condition \ref{item:cond-1}. Furthermore, for almost every ${\w\in[-1/2,1/2]^d}$,
	$$
	\sum_{\ell\in\Z^d} |\hat\psi(\w+\ell)| = \sum_{\ell\in\Z^d} \chi_{[-1/2,1/2]^d}(\w+\ell) = 1
	$$
	and
	$$
	Z_{\hat\psi}(\w,x) =1.
	$$
	This shows that Condition \ref{item:cond-2} holds with $C=1$, whereas Condition \ref{item:cond-3} holds with $K=0$. Set also $$\Phi=\{\phi_1,\dots,\phi_k\},\quad\phi_i = e^{2\pi i \langle \ell_i,\cdot \rangle} \psi,\quad i=1,\dots,k,$$ then $\|\phi_i\|_2 = \|\psi_i\|_2=1$ and $|\supp(\hat\phi_i) \cap \supp (\hat\phi_j)| =0$ for $i\neq j$. Hence $E(\Phi)=\{\phi_i(\cdot-\ell)\,:\, i=1,\dots,k,\, \ell\in\Z^d\}$ is an orthonormal system in $L^2(\R^d)$ while $V=S(\Phi)=PW_\Omega$, so we can invoke Theorem \ref{thm:main-thm}.
\end{proof}

	For the proof of Theorem  \ref{coro:k-tiles}, we will need the following lemma.
\begin{lemma}\label{lem:complex-index}
	Let $\Omega\subset\R^d$ be a compact set with tiling complexity index $N_\Omega$ and let $k = k_\Omega$ be given by \eqref{eq:k-level}. Then there exists a compact set $\Omega^*\supset \Omega$ that multi-tiles $\R^d$ at level $k$ with respect to the integer lattice, i.e.,
	$$\sum_{\ell\in\Z^d} \chi_{\Omega^*} (\w+\ell) = k,\quad \text{for a.e. }\w\in [-1/2,1/2)^d,$$ 
	such that
	$N_{\Omega^*} \leq N_{\Omega}$. 
\end{lemma}

\begin{proof}
	In \cite[Proposition 2.1]{BCHLMM18} it is shown that $\Omega$ can be enlarged to a compact set $\Omega^* \supset \Omega$ that multi-tiles $\R^d$ at level $k$ with respect to the integer lattice. Here we follow the argument of \cite{BCHLMM18} to show that the enlarged set can be chosen to also satisfy $N_{\Omega^*} \leq N_{\Omega}$. 
	
	Consider disjoint measurable sets $Q_1,\dots,Q_{N_\Omega}\subset [-1/2,1/2]^d$ and $B_1,\dots,B_{N_\Omega}\subset \Z^d$ such that the decomposition in \eqref{eq_compi} holds up to a null measure set. Since $k = k_\Omega$ (c.f. \eqref{eq:k-level}), we know that $\#B_n \leq k$ for every $n=1,\dots,N_\Omega$. We extend each $B_n$ to $\widetilde{B}_n\subseteq L_\Omega$ --- c.f.  \eqref{eq:L_Omega} --- by inserting $(k-\#B_n)$ distinct elements from $L_\Omega\setminus B_n$. Notice that, by definition, $L_\Omega$ has at least $k$ elements. Define $Q_0 = [-1/2,1/2]^d \setminus \bigcup_{n=1}^{N_\Omega} Q_n$, and set
	\begin{equation}\label{eq:Omega*}
		\Omega^* :=  \left((Q_1\cup Q_0) + \widetilde{B}_1\right) \cup \left(\bigcup_{n=2}^{N_\Omega} Q_n + \widetilde{B}_n\right).
	\end{equation}

	Observe that $\Omega \subseteq\Omega^*$, and $N_{\Omega^*} \leq N_{\Omega}$.  
	Finally, since $\#\widetilde{B}_n = k$ for every $n=1,\dots,N_{\Omega}$, we get that
	$\sum_{\ell\in\Z^d} \chi_{\Omega^*} (\w+\ell) = k,$ for a.e. $[-1/2,1/2)^d$.
\end{proof}

\begin{proof}[Proof of Theorem \ref{coro:k-tiles}]
	By Lemma \ref{lem:complex-index}, there exists a compact set $\Omega^* \supset \Omega$ that multi-tiles $\R^d$ at level $k$ with respect to the integer lattice such that  $N_{\Omega^*} \leq N_{\Omega}$.
	Thus, assume without loss of generality that $\Omega=\Omega^*$.
	
	Consider a decomposition \eqref{eq_compi} that achieves the tiling complexity index $N=N_\Omega$. Since $\Omega$ is a multi-tile of $\R^d$ at level $k$ with respect to the integer lattice,
	\begin{align}\label{eq_dex}	 [-1/2,1/2]^d = \bigcup_{n=1}^{N_\Omega} Q_n,
	\end{align}
	up to a null measure set, and $\# B_n = k$ for every ${n=1,\dots,N_\Omega}$. Let us write $B_n=\{b_1^n,\dots,b_k^n\}$ and set
	$$\Omega_i := \bigcup_{n=1}^{N_\Omega} Q_n + b_i^n,\quad i=1,\dots,k.$$
	Then $\Omega = \bigcup_{i=1}^k \Omega_i$ and $\left|\Omega_i \cap \Omega_{j}\right|=0$ if $i\neq j$. Set also $\hat\phi_i = \chi_{\Omega_i}$ and $\Phi=\{\phi_1,\dots,\phi_k\}$. Then $E(\Phi)$ is orthonormal and $PW_{\Omega}=S(\Phi)$. Moreover, Proposition \ref{propo:sampling-condition} is applicable because $PW_{\Omega}$ is a reproducing kernel Hilbert space (this result does not require the more stringent hypotheses of Theorem \ref{thm:main-thm}).
	
	Let $X=\{x_1,\dots,x_m\}$ be independent and uniformly distributed in  $[-1/2,1/2]^d$. Recall the definition of the random matrices $T(\w)$ and $T_r(\w)$ in \eqref{eq:T(w)} and \eqref{eq:T_r}. We now note that the matrices have in fact a very simple form, much in similarity to \cite{Mar06,Ko15,AAC15}. More precisely, for a.e. $\w\in Q_n$ with $n=1,\dots,k$, we have that for $i,j\in\{1,\dots,k\}$, and  $r=1,\dots,m$, 
	\begin{align}
		(T_r(\w))_{i,j} &=  \overline{Z_{\hat\phi_i}(\w,-x_r)}Z_{\hat\phi_j}(\w,-x_r)\\
		&= \bigg(\sum_{\ell\in \Z^d}  \overline{\hat{\phi}_i(\w+\ell)}e^{-2\pi i \langle x_r,\ell\rangle}\bigg)\\
		&\qquad\times\bigg( \sum_{\ell\in \Z^d}  \hat{\phi}_j(\w+\ell)e^{2\pi i \langle x_r,\ell\rangle}\bigg)\\
		&=  e^{-2\pi i \langle x_r,b_i^n\rangle} e^{2\pi i \langle x_r,b_j^n\rangle}\\
		&= e^{2\pi i \left\langle x_r,b_i^n-b_j^n\right\rangle}.\label{eq:T_r-k-tile}
	\end{align} 
	
	Inspecting \eqref{eq:T_r-k-tile} and \eqref{eq_dex}, we see that, discarding a null measure set of parameters $\w$, there exist only a finite number of configurations for the matrices $T(\w)= \sum_{r=1}^m T_r(\w)$, say
	$$T(\w_1),\dots,T(\w_{N_\Omega}),$$
	where $\w_n\in Q_n$. 
	Hence, we only need to check \eqref{eq:sampling-ineq-Zak} for those matrices.
	Proceeding as in the proof of Theorem \ref{thm:main-thm}, we see that 
	\begin{align}
		\MoveEqLeft\mathbb{P}\bigg( \text{For a.e. } \w\in [-1/2,1/2]^d,\, \forall \, a\in\C^k,\, \|a\|_{\C^k} =1: \\
		& m(1-\alpha) \leq \langle T(\w) a,a \rangle \leq m(1+\alpha) \bigg) \\
		&\qquad\leq 1- 2\,N_\Omega  \, k\exp\left(\frac{-\alpha^2m}{ 10 k}\right)
	\end{align}
	and the conclusion follows.
\end{proof}

\begin{proof}[Proof of Theorem \ref{teo general set}] Given $\rho>0$, let $\Omega_\rho$ denote the smallest set (with respect to the inclusion) that contains $\Omega$, and it has the form
	$$
	\bigcup_{i=1}^k [-\rho/2,\rho/2]^d+\ell_i,
	$$
	where $\ell_1,\dots,\ell_k$  are distinct points of the lattice $\rho\Z^d$. By a covering argument based on \cite[Theorems 5 and 6]{Ca05} --- see \cite[Lemma 2.1]{MRS23} --- we get the estimate
	$$
	|\Omega_\rho\setminus \Omega|\leq 2\ \frac{\beta_d (1+\sqrt{d-1})^2 11^d}{\kappa_{\partial \Omega}}\cdot\cH^{d-1}(\partial \Omega)\cdot \rho,
	$$
	where $\beta_d$ denotes the volume of the $d$-dimensional unit ball.  Let $C_d:= 2\beta_d (1+\sqrt{d-1})^2 11^d$ and let
	\begin{equation}\label{el rho}
		\rho=\frac{\kappa_{\partial \Omega}}{C_d} \cdot \frac{|\Omega|}{ \cH^{d-1}(\partial \Omega)},
	\end{equation}
	$\varepsilon>0$ and $0<\alpha<1$. Combining Corollary \ref{coro:union-cubes} with a dilation argument, if
	\begin{equation}\label{emes dominio general}
		m\geq \frac{10}{\alpha^2}\cdot k\cdot \log\left( \frac{2k}{\varepsilon}\right),
	\end{equation}
	and $x_1,\dots,x_m$ are independent random variables uniformly distributed in $[-(2\rho)^{-1},(2\rho)^{-1}]^d$, 
	then the following inequalities
	\begin{equation}\label{eq:sampling-ineq-general rho}
		m(1-\alpha)\,\rho^d \,\|f\|^2 \leq  \sum_{r=1}^m  \sum_{\ell\in\frac{1}{\rho}\Z^d} |f(x_r+\ell)|^2 \leq m(1+\alpha)\,\rho^d\, \|f\|^2
	\end{equation}
	hold for every $f\in PW_{\Omega_\rho}$ with probability at least $1-\varepsilon$. In particular, they also hold for every $f\in PW_\Omega$. Since $k\leq \frac{2|\Omega|}{\rho^d}$, if
	$$
	m\geq \frac{20}{\alpha^2} \cdot \frac{|\Omega|}{\rho^d} \cdot \log\left(\frac{4}{\varepsilon} \cdot \frac{|\Omega|}{\rho^d}\right),
	$$ 
	then $m$ also satisfies \eqref{emes dominio general}, and therefore the sampling inequalities \eqref{eq:sampling-ineq-general rho} hold for such $m$. 
\end{proof}

\section{Conclusion}
We considered a multi-dimensional version of the random interleaved sampling strategy and showed that it is effective for very general signal models generated by a few basic frequency profiles. As shown by Landau's foundational work \cite{landau1967sampling,La67} and subsequent extensions \cite{MR2224392,MR3742438}, the number of such frequency profiles determines the so-called \emph{Nyquist rate}, an information theoretic limit below which no sampling strategy can succeed. Our results support the conventional wisdom in sampling theory that a generic acquisition strategy with a moderate amount of oversampling leads to numerically stable and reliable reconstructions, while sampling patterns exactly achieving Landau's benchmark may be more fragile and less practically attractive. We formally confirmed the validity of that intuition for signals with possibly complex and disconnected spectra.

\section*{Acknowledgements}

{J.A. was supported by grants: PICT 2019 0460 (ANPCyT), PIP112202101 00954CO (CONICET), 11X829 (UNLP), PID2020-113048GB-I00 (MCI), and María Zambrano CA3/RSUE/2021-00185 (Ministerio de Universidades, el Plan de Recuperación, Transformación y Resiliencia y la UAM).
	D.C. was supported by the European Union’s programme Horizon Europe, HORIZON-MSCA-2021-PF-01, Grant agreement No. 101064206. J.L.R. gratefully acknowledges support from the Austrian Science Fund (FWF): Y 1199.

\bibliographystyle{IEEEtran}  
\bibliography{ransis-arxiv.bib}  

\vfill
	
	Jorge Antezana is with Centro de Matemática de La Plata, Facultad de Ciencias Exactas, Universidad Nacional de La Plata, 1900 La Plata, Bs.As., Argentina, Instituto Argentino de Matemática  “Alberto P. Calderón", (CONICET), 1083 CABA, Argentina, and also with Departamento de matemática, Facultad de Ciencias, Universidad Autónoma de Madrid, 28049 Madrid, Spain (e-mail: jorge.antezana@uam.es).
	
	Diana Carbajal is with the Faculty of Mathematics, University of Vienna,
	1090 Vienna, Austria (e-mail: diana.agustina.carbajal@univie.ac.at).

	Jos\'e Luis Romero is with the Faculty of Mathematics, University of Vienna, 
	1090 Vienna, Austria, and with the Acoustics Research Institute, Austrian Academy of
	Sciences, Vienna, 1040, Austria (e-mail: jose.luis.romero@univie.ac.at).
	}

\end{document}